\newtheorem{Proposition}{Proposition}
\newtheorem{Theorem}{Theorem}
\newtheorem{Lemma}{Lemma}	
\newtheorem{Corollary}{Corollary}
\theoremstyle{definition}
\newtheorem{Definition}{Definition}
\DeclareMathOperator{\Ric}{Ric}
\DeclareMathOperator{\PSL}{PSL}
\newcommand{\R}{\mathbb{R}}
\newcommand{\from}{:}
\newcommand{\Oc}{\mathcal{O}}
\newcommand{\gs}{\sigma}
\newcommand{\gS}{\Sigma}
\newcommand{\gr}{\rho}
\newcommand{\gO}{\Omega}
\newcommand{\gG}{\Gamma}
\newcommand{\g}{\gamma}
\newcommand{\dg}{\dot{\gamma}}
\newcommand{\Sc}{\mathcal{S}}
\newcommand{\hConf}{[h]}
\begin{document}
	\pagestyle{plain}
	\title{An ambient approach to conformal geodesics}
	\author{Joel Fine\footnote{Joel.Fine@ulb.ac.be},
	\, Yannick Herfray\footnote{Yannick.Herfray@ulb.ac.be}\\ 
		{\small \it D\'epartment de Math\'ematique,
			Universit\'e Libre de Bruxelles, } \\ {\small \it CP 218, Boulevard du Triomphe, B-1050 Bruxelles, Belgique.}}
	\maketitle
	\begin{abstract}\noindent
\noindent
Conformal geodesics are distinguished curves on a conformal manifold, loosely analogous to geodesics of Riemannian geometry. One definition of them is as solutions to a third order differential equation determined by the conformal structure. There is an alternative description via the tractor calculus. In this article we give a third description using ideas from holography. A conformal $n$-manifold $X$ can be seen (formally at least) as the asymptotic boundary of a Poincar\'e--Einstein $(n+1)$-manifold $Y$. We show that any curve $\gamma$ in $X$ has a uniquely determined extension to a surface $\Sigma_\gamma$ in $Y$, which which we call the \emph{ambient surface of $\gamma$}. This surface meets the boundary $X$ in right angles along $\gamma$ and is singled out by the requirement that it it be a critical point of renormalised area. The conformal geometry of $\gamma$ is encoded in the Riemannian geometry of $\Sigma_\gamma$. In particular, $\gamma$ is a conformal geodesic precisely when $\Sigma_\gamma$ is asymptotically totally geodesic, i.e.\ its second fundamental form vanishes to one order higher than expected. 

We also relate this construction to tractors and the ambient metric construction of Fefferman and Graham. In the $(n+2)$-dimensional ambient manifold, the ambient surface is a graph over the bundle of scales. The tractor calculus then identifies with the usual tensor calculus along this surface. This gives an alternative compact proof of our holographic characterisation of conformal geodesics.
	\end{abstract}
	\clearpage
	
	\section{Introduction}
	
	\subsection{The Fefferman--Graham approach to conformal geometry}
We begin with a review of the ambient metric approach to conformal geometry pioneered by Fefferman and Graham \cite{fefferman_conformal_1985,fefferman_ambient_2012}. More details are provided later in the article for the benefit of those unfamiliar with these constructions.
	
	Given a compact $n$-manifold $X$ and a conformal class $[h]$ of metrics with signature $(p,q)$ on $X$, Fefferman and Graham gave two different ways to extend $(X,[h])$ to genuine metrics solving two versions of Einstein's equations, one a $(n+1)$-manifold with metric of signature $(p+1,q)$ and  $\Ric(g)=-ng$, the other a $(n+2)$-manifold with metric of signature $(p+1,q+1)$ and $\Ric=0$. We focus in this introduction on the $(n+1)$-dimensional extension but in the main body of the article both extensions are used. From now on we focus on Riemannian signature $(p,q)=(n,0)$ but the results extend straightforwardly to other signatures.
	
	The $(n+1)$-dimensional manifold $Y$ is diffeomorphic to $X \times [0,1)$. Given a representative $h_0 \in[h]$ of the conformal structure on $X$, there exists a unique way to identify $Y$ and $X \times [0,1)$ under which $g$ has the form
	\begin{equation}\label{conformally-compact}
		g = \frac{d \rho^2 + h(\rho)}{\rho^2} 
	\end{equation}
	Here $\rho$ is the projection onto the factor $[0,1)$ and $h(\rho)$ is a path of metrics on $X$ with $h(0) = h_0$. More correctly, this is a \emph{formal} construction. Things are cleanest when $n$ is odd. In this case there is a formal power series expansion of $h$ in even powers of $\rho$, of the form 
	\begin{equation}
		h(\rho) = h_0 + h_2 \rho^2 + h_4 \rho^4 + \cdots 
		\label{FG-expansion}
	\end{equation}
	The metric is Einstein in the formal sense that $\Ric(g)+ng = \Oc(\rho^{-\infty})$. (The word ``formal'' here means no attention is paid to convergence; $\Oc(\rho^{-\infty})$ means all coefficients of the corresponding power series vanish.) The expansion~\eqref{FG-expansion} is canonically associated to $h_0$: the coefficients $h_{2k}$ are completely determined by the curvature of $h_0$ and its derivatives. Moreover, whilst different choices of representatives of $[h]$ lead to different expansions, the resulting metrics are \emph{isometric}, just not in a way which is compatible with the identifications $Y \cong X \times [0,1)$. It follows that the Riemannian manifold $(Y,g)$ is canonically associated to the conformal manifold $(X,[h])$
	
	When $n \geq 4$ is even, the analogous expansion can be carried out up to $\Oc(\rho^{n-1})$ but the next step is obstructed (at least in general). The result is a metric which has $\Ric(g)+ng = \Oc(\rho^n)$. (The case $n=2$ is exceptional; we momentarily ignore it here, but treat it carefully in the main body of the article.) No matter the parity of the dimension, to a conformal $n$-manifold $(X,[h])$ Fefferman and Graham produce a germ to order $n$ of a formal Riemannian Einstein metric $(Y,g)$, called the \emph{formal Poincar\'e--Einstein metric with conformal infinity $(X,[h])$}.
	
	Since $(Y,g)$ is canonically associated to $(X,[h])$ the Riemannian geometry of $Y$ can be used to understand the conformal geometry of $X$. This is the mathematical point of view on holography and it has been extremely successful, giving rise, for example, to new conformally invariant curvature tensors and differential operators (see for example \cite{fefferman_q-curvature_2002}). This article applies this approach to the study of \emph{curves} in $(X,[h])$.
	
	\subsection{The ambient surface of a curve $\gamma \subset X$}

	To any curve $\gamma \colon I \to X$, we canonically associate an \emph{ambient surface} $\Sigma \subset Y$  (see figure \ref{fig:AmbientSurface}). The surface meets the boundary at right angles in the curve $\gamma$. The area of such a surface is necessarily infinite, one can however define the so-called \emph{renormalized area} -which is finite. It was introduced in \cite{graham_conformal_1999} and has been the subject of much investigation in both the mathematics and physics literature (see, for example, \cite{alexakis_renormalized_2010, ryu_holographic_2006}). Given $h_0 \in [h]$ and the expansion~\eqref{FG-expansion}, one shows that the area $A(\epsilon)$ of $\Sigma$ contained in the region $\rho \geq \epsilon$ has an expansion in powers of $\epsilon$ of the form 
	\begin{equation}\label{area-expansion}
		A(\epsilon) = l(\gamma)\epsilon^{-1} + \mathcal{A} + \Oc(\epsilon)
	\end{equation}
	for some $\mathcal{A} \in \R$ (where $l(\gamma)$ is the length of $\gamma 
	\subset X$ with respect to $h_0$). Crucially, this constant term $\mathcal{A}$ \emph{does not depend on the choice of $h_0 \in [h]$}. This is the renormalised area of $\Sigma$. 
		
Our first result is the following.
	
	\begin{Theorem}\label{existence-uniqueness-ambient-surface}
		Let $(X,[h])$ be a conformal $n$-manifold with formal Poincar\'e--Einstein extension $(Y,g)$. Given any embedded curve $\gamma \subset X$ there exists a unique formal surface $\gS \subset Y$ which is a critical point for renormalised area and which meets $X$ in right angles along $\gamma$. 
	\end{Theorem}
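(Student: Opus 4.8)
The plan is to turn the two conditions in the theorem into a boundary value problem for the minimal surface equation and to solve it as a formal power series in $\rho$, with solvability at each order controlled by an indicial analysis. First I would make precise the sense in which a critical point of renormalised area is a minimal surface. Varying $\Sigma$ through surfaces that meet $X$ at right angles along $\gamma$, with variation field $V$, and differentiating the truncated area $A(\epsilon)=\int_{\Sigma\cap\{\rho\ge\epsilon\}}dA_g$, one obtains
\begin{equation}
\delta A(\epsilon) = -\int_{\Sigma\cap\{\rho\ge\epsilon\}}\langle H,V\rangle\,dA_g + \int_{\Sigma\cap\{\rho=\epsilon\}}\langle V,\eta\rangle\,ds,
\end{equation}
where $H$ is the mean curvature and $\eta$ the outward unit conormal. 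Since the divergent coefficient $l(\gamma)$ in \eqref{area-expansion} depends only on $\gamma$, subtracting it commutes with the variation, and $\delta\mathcal{A}(V)$ is the $\epsilon\to 0$ limit of the right hand side. Interior variations that decay fast enough at the boundary then force $H=0$, while the residual boundary term, which survives precisely for variations moving $\Sigma$ at the critical rate, must vanish separately. I expect this to split the Euler--Lagrange content of criticality into two parts: the minimal surface equation in the bulk and one scalar condition along $\gamma$; note that meeting $X$ at right angles is the hypothesis that makes $\mathcal{A}$ well defined on the admissible class via \eqref{area-expansion}.

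With $H=0$ identified as the bulk equation I would solve it formally. Fix a representative $h_0\in[h]$ and Fermi coordinates $(t,x^a,\rho)$ adapted to $\gamma$, so that $\gamma=\{x=0,\rho=0\}$ and $\Sigma$ is a graph $x^a=X^a(t,\rho)$; the conditions that $\Sigma$ limit to $\gamma$ and meet $X$ orthogonally read $X^a(t,0)=0$ and $\partial_\rho X^a(t,0)=0$. Expanding $X^a=\sum_{m\ge 1}X^a_m(t)\rho^m$ and inserting into $H=0$, the linearisation at the model (the totally geodesic $\mathbb{H}^2$ over a boundary geodesic, with Jacobi operator $\Delta_\Sigma-2$) produces at order $\rho^m$ an equation of the shape $(m-2)(m+1)\,X^a_m = F^a_m$, where $F^a_m$ is a universal expression in the lower coefficients, in $\gamma$, and in the curvature of $h_0$. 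The indicial factor $(m-2)(m+1)$ comes from $s(s-1)=2$, whose roots are $s=-1$ and $s=2$; the mode $\rho^{-1}$ is excluded by $X\to0$, and the order-$\rho$ equation forces $X^a_1=0$, i.e. minimal surfaces meet the boundary orthogonally, so the right-angle hypothesis is automatically consistent. Hence for every $m\ne 2$ the coefficient $X^a_m$ is determined, recursively and uniquely, by data of lower order.

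The whole difficulty therefore concentrates at the resonant order $m=2$. There the bulk equation reads $0\cdot X^a_2=F^a_2$: it does not see $X^a_2$ and instead imposes the solvability constraint $F^a_2=0$ (which I would check holds, with the absence of a logarithmic term guaranteed by the assumed expansion \eqref{area-expansion}). The coefficient $X^a_2$ is exactly the free Jacobi mode $\rho^2$, so minimality alone leaves a one-function family of formal surfaces. This is where renormalised area does its work: evaluated on the variation that changes $X^a_2$, a field behaving like $\rho^2$ at the boundary, the residual boundary term of $\delta\mathcal{A}$ is finite, linear in the variation and affine in $X^a_2$, so its vanishing is one scalar equation pinning $X^a_2$ uniquely in terms of the already-fixed lower coefficients. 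Feeding this back closes the recursion at every order and yields a unique formal graph $X^a(t,\rho)$, hence a unique formal surface $\Sigma$.

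The main obstacle is precisely this resonant-order matching: I must verify that the boundary condition extracted from the finite part of the first variation is non-degenerate in $X^a_2$ and compatible with the solvability constraint $F^a_2=0$, so that it neither over- nor under-determines the free mode. Closely tied to it are the even-dimensionality of $\Sigma$, which threatens a logarithmic term and a conformal anomaly at the resonant order, and the exceptional case $n=2$; both I would handle through the careful regularisation already underlying \eqref{area-expansion}. As a cross-check and a shorter second derivation I would repeat the resonant-order computation in the $(n+2)$-dimensional ambient picture, where the surface is a graph over the bundle of scales and the recursion collapses to an algebraic identity.
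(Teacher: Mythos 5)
Your overall strategy is the one the paper follows: split criticality of renormalised area into a bulk condition (minimality, from boundary-fixing variations) plus a finite boundary term (from variations moving the curve), solve $H=0$ as a formal power series with two free data, and use the boundary term to kill the Neumann mode. This is exactly the content of the paper's Lemmas on compactly supported and non-compactly supported first variations.

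However, there is a concrete error in the indicial analysis, and it sits precisely at the step you yourself flag as ``the main obstacle.'' You set the surface up as a coordinate graph $x^a=X^a(t,\rho)$ but then read off the indicial roots $-1$ and $2$ from the Jacobi operator $\Delta_\Sigma-2$ acting on the $g$-unit-normal displacement. These two normalisations differ by a factor of $\rho$ (the unit normal has coordinate length $\sim\rho$), so for the coordinate graph the indicial polynomial is $m(m-3)$, with roots $0$ and $3$ --- as the paper states and as in Alexakis--Mazzeo and Graham's higher-dimensional work. Consequently your recursion $(m-2)(m+1)X^a_m=F^a_m$ places the resonance at the wrong order: at $m=2$ there is no degeneracy and no solvability constraint; rather $X^a_2$ is \emph{determined} by minimality and is exactly the coefficient $v=\nabla_{\dot{\gamma}}\bigl(|\dot{\gamma}|^{-2}\dot{\gamma}\bigr)$ of the paper's expansion \eqref{3- Holographic dual, expansion} (essentially the geodesic curvature of $\gamma$). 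Your proposed constraint $F^a_2=0$ would generically \emph{fail} for a non-geodesic curve, and your scheme would then wrongly determine $X^a_3$ from lower-order data. The genuinely free Neumann mode is the order-$\rho^3$ coefficient $n$, and the finite part of the first variation of renormalised area is $-\int_\gamma |\dot{\gamma}|^{-2}h(\phi,n)\,ds$, linear in $n$; criticality therefore forces $n=0$. The fix is a one-order shift in the bookkeeping, but as written the resonant-order matching you propose to verify would not close, so the proof is not correct in its current form.
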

	
	We call the distinguished surface of Theorem~\ref{existence-uniqueness-ambient-surface} the \emph{ambient surface associated to $\gamma$} and we denote it by $\Sigma_\gamma$.
	 We remark that, just as for the Fefferman--Graham construction, in general the ambient surface is purely formal. When $n$ is odd, it has a uniquely determined formal expansion to all orders. When $n$ is even, it exists to the same order as the Fefferman--Graham metric. For comments on the global existence question \\S\ref{global} below.
	 	\begin{figure}[h]
	 	\begin{center}
	 		\includegraphics[scale=0.5]{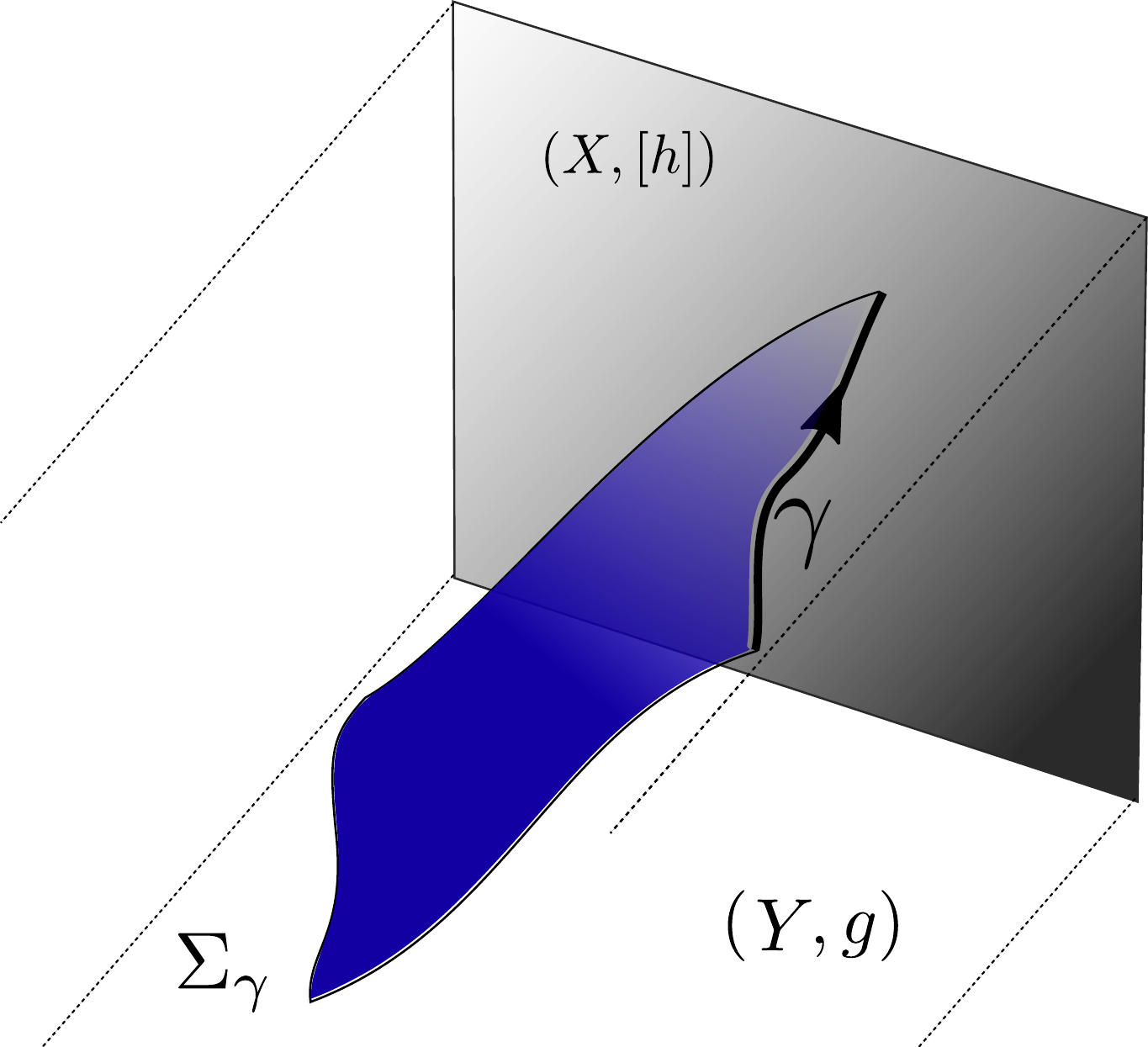}
	 	\end{center}
	 	\caption{The associated ambient surface $\Sigma_{\gamma} \subset Y$ of a conformal curve $\gamma \subset X$}
	 	\label{fig:AmbientSurface}
	 \end{figure}
	
	We say a few words about the proof of Theorem~\ref{existence-uniqueness-ambient-surface}. Surfaces which are critical points of renormalised area are automatically minimal. One can see this by considering variations of the surface which are compactly supported away from the boundary, and so which only affect the constant term in $A(\epsilon)$. Analysing the minimal surface equation, one sees that it has two indicial roots, $0$ and $3$. (This is done in \cite{alexakis_renormalized_2010} in dimension $n=2$, and in \cite{graham_higher-dimensional_2017} in higher dimensions.) This means that in the local existence problem, one can expect to prescribe the boundary value (Dirichlet data) corresponding to the root $0$, and a second piece of boundary data, corresponding to the root $3$ and which one can interpret as Neumann data. Once both Dirichlet and Neumann data are fixed, the solution is completely determined. (The simple analogue is that given a pair of functions $\phi,\psi$ on $S^1$ there is a unique harmonic function $h$ defined on a neighbourhood of $S^1 \subset \R^2$ with $h|_{S^1} = \phi$ and $\partial_n h = \psi$.)
	
	It turns out that the vanishing of the Neumann data is equivalent to the surface being critical for \emph{renormalised} area, under perturbations which are not compactly supported and actually move the boundary curve. (This was shown in \cite{alexakis_renormalized_2010} for dimension $n=2$, and in arbitrary dimensions is shown below.) This explains why one should expect a unique critical surface to emanate from any curve $\gamma$.
	
	We use this ambient surface $\Sigma_\gamma$ to study the conformal geometry of $\gamma \subset X$. A first point concerns the canonical parametrisations of $\gamma$. These are conformal analogues of parametrisation by arc length in Riemannian geometry, first introduced in \cite{bailey_conformal_1990}. Given $h \in [h]$, the curve $\gamma \colon I \to X$ is \emph{conformally parametrised} if the following third order equation is satisfied
	\begin{equation}
		h\left(\dg , |\dg|^{-1}\nabla_{\dg}\left(|\dg|v\right) + \frac{1}{2}|v|^2 \dg - h^{-1}\left( P_h(\dg) \right)  \right)
		=0
		\label{conformal-parametrisation}
	\end{equation}
	Here $\nabla$ denotes the Levi-Civita connection of $h$ and we use the shorthand $v = \nabla_{\dg} \left( |\dg|^{-2}\dg\right)$. Meanwhile,
	\[
	P_h = \frac{1}{n-2} \Ric^0_h + \frac{R_h}{2n(n-1)}h
	\]
	is the Schouten tensor of $h$ (with $R_h$ the scalar curvature and $\Ric_h^0$ the trace-free Ricci curvature). 
	
	One can check that a conformal parametrisation always exists, is uniquely determined up to the action of $\PSL(2,\R)$ and, moreover, the Definition~\ref{conformal-parametrisation} is independent of the choice of $h\in [h]$ (remembering of course that $\nabla$, $v$ and $P_h$ will all change). As presented here, the condition~\eqref{conformal-parametrisation} appears completely mysterious. There is a neat interpretation in terms of tractor calculus \cite{bailey_thomass_1994}. Our next result gives an alternative geometric interpretation of these preferred conformal parametrisations in terms of our ambient surface. (We make connection the with the tractor point of view later.) Given a parametrised curve $u \colon I \to X$ with image $\gamma$, we say that $U \colon I \times [0,1) \to Y$ extends $u$ if $U(s,0) = u(s)$ for all $s\in I$. In the following result, we  treat $I \times [0,1) \subset \R \times [0,\infty)$ as a subset of the upper half-space with hyperbolic metric $g_{\mathrm{hyp}} = t^{-2}(dt^2+ ds^2)$. 
	
	\begin{Theorem}\label{isometric-parametrisation}~
		\begin{itemize}
			\item
			Given a parametrised curve $u \colon I \to X$ with image $\gamma$ there exists an extension $U \colon I \times [0,1) \to Y$ parametrising $\Sigma_\gamma$ and such that $U^*(g) =g_{\mathrm{hyp}} + \Oc(t^2)$, where $\Oc(t^2)$ refers to the norm of a tensor with respect to $g_{\mathrm{hyp}}$.
			\item
			There exists an extension $U$ of $u$ with $U^*(g)=g_{\mathrm{hyp}} +\Oc(t^3)$ if and only if $u$ is a conformal parametrisation of $\gamma$. 
		\end{itemize}
	\end{Theorem}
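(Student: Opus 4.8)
The plan is to build $U$ explicitly in Fefferman--Graham coordinates and match $U^*g$ to $g_{\mathrm{hyp}}$ order by order in $t$. By Theorem~\ref{existence-uniqueness-ambient-surface}, $\Sigma_\gamma$ has a determined formal expansion; writing it as a graph $(x,\rho)\mapsto(\Phi(x,\rho),\rho)$ over the curve with $\Phi(x,0)=u(x)$, the right-angle condition removes the linear term in $\rho$ and the minimal-surface equation fixes
\[
\Phi(x,\rho)=u(x)+\rho^2\phi_2(x)+\rho^3\phi_3(x)+\cdots,
\]
where $\phi_2$ is determined by the acceleration of $\gamma$ and $\phi_3$ is the Neumann datum pinned down by criticality. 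I would then seek $U(s,t)=\big(\Phi(\sigma,\rho),\rho\big)$ with $\rho(s,t)=\lambda(s)t+p_2(s)t^2+p_3(s)t^3+\cdots$ and $\sigma(s,t)=s+q_1(s)t+q_2(s)t^2+\cdots$, the Taylor coefficients being exactly the reparametrisation freedom compatible with $U(s,0)=u(s)$. Pulling back $g=\rho^{-2}(d\rho^2+h(\rho))$ and writing $U^*g=t^{-2}(A\,dt^2+2B\,ds\,dt+C\,ds^2)$, one finds $|U^*g-g_{\mathrm{hyp}}|_{g_{\mathrm{hyp}}}=\sqrt{(A-1)^2+2B^2+(C-1)^2}$, so matching to $\mathcal{O}(t^k)$ is the same as $A-1,\,B,\,C-1=\mathcal{O}(t^k)$.

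For the first bullet I would only need orders $t^0$ and $t^1$. At leading order $C_0=|\dot u|_{h_0}^2/\lambda^2$, so choosing $\lambda=|\dot u|_{h_0}$ gives $C_0=1$, while the conditions $A_0=1$ and $B_0=0$ both reduce to $q_1=0$ once the right-angle condition is used. At order $t^1$, the equation $C_1=0$ forces $p_2=0$, which in turn yields $A_1=0$, and $B_1=0$ is a single linear equation fixing $q_2$ in terms of $\lambda'$ and $h_0(\dot u,\phi_2)$. Thus for any parametrisation $u$ one can arrange $U^*g=g_{\mathrm{hyp}}+\mathcal{O}(t^2)$, which is the first bullet.

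The second bullet is the substance of the theorem and I would reach it by carrying the same expansion to order $t^2$, where the three equations $A_2=0$, $B_2=0$, $C_2=0$ appear. I expect $A_2=0$ to determine $p_3$ (in terms of $q_2,\lambda,\phi_2$) and $B_2=0$ to determine $q_3$ — this being the only place the Neumann datum $\phi_3$ enters. The crucial structural fact is that, after $p_3$ is eliminated via $A_2=0$, the quantity $C_2$ contains neither of the remaining free functions $p_3,q_3$ nor $\phi_3$: it depends only on $u$ and the conformal data along $\gamma$. Hence $C_2=0$ is a genuine obstruction, and the theorem amounts to identifying it with~\eqref{conformal-parametrisation}. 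Since $A_2=0$ and $B_2=0$ are always solvable, this shows the $\mathcal{O}(t^3)$ extension exists precisely when $C_2=0$, i.e.\ precisely when $u$ is a conformal parametrisation.

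The main obstacle is exactly this last identification. At order $t^2$ the pullback mixes the intrinsic curve data ($\lambda,\lambda',\phi_2$ and the value of $q_2$ found above), the ambient correction $h_2$, and — most delicately — the first covariant variation of the metric field $h_0$ along the $\mathcal{O}(t^2)$ displacement $V=q_2\dot u+\lambda^2\phi_2$ of $\Sigma_\gamma$ off $X$; it is this variation that produces the covariant derivative $\nabla_{\dot\gamma}$ and the weights $|\dot\gamma|$ appearing in~\eqref{conformal-parametrisation}, while the Fefferman--Graham identity $h_2=-P_{h_0}$ is what makes the Schouten tensor $P_h$ surface. Organising these terms covariantly, rather than in a fixed coordinate frame where spurious Christoffel symbols obscure the structure, and then recognising the resulting scalar as the third-order operator of~\eqref{conformal-parametrisation} is where the real computation lies. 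As a consistency check, the $\PSL(2,\mathbb{R})$ ambiguity of conformal parametrisations should match the isometries of $g_{\mathrm{hyp}}$ that preserve the boundary parametrisation to the relevant order.
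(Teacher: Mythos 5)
Your approach is essentially the paper's: Lemma~\ref{3- Lemma: Conformal parametrisation} and Corollary~\ref{3- Crllr: Induced metric} carry out exactly your order-by-order matching, phrased as putting $\Sigma_\gamma$ in ``asymptotically isothermal'' coordinates, and your structural claims are correct --- in particular that the reparametrisation freedom ($p_3$, $q_3$ in your notation) absorbs two of the three conditions at relative order $t^2$, leaving a single scalar obstruction that is independent of the Neumann datum $\phi_3$. What you have not done, however, is the computation that the theorem actually asserts: showing that this obstruction equals the third-order operator of~\eqref{conformal-parametrisation}. In the paper this is the content of Corollary~\ref{3- Crllr: Induced metric}, which identifies the conformal factor of the induced metric as $1+\tfrac{2}{3}t^2\,\kappa(\gamma,v,h)$ with $\kappa$ given explicitly by~\eqref{3- Conformal Parametrisation Def1}, combined with Theorem~\ref{3- Thrm: Holographic dual}: minimality of $\Sigma_\gamma$ forces $v=\nabla_{\dot\gamma}\bigl(|\dot\gamma|^{-2}\dot\gamma\bigr)$ in full, not merely its tangential part, and only then does $\kappa(\gamma,v,h)$ (which depends on the normal component of $v$ through $|v|^2$ and $\nabla_{\dot\gamma}(|\dot\gamma|v)$) coincide with the left-hand side of~\eqref{conformal-parametrisation}. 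You do have this input implicitly (your $\phi_2$ ``determined by the acceleration'' and pinned by the minimal-surface equation), and you correctly locate the two mechanisms that make the Schouten tensor and the covariant derivative appear ($h_2=-P$ and the variation of $h_0$ along the $\Oc(t^2)$ displacement), so the plan would go through; but as written, the proof of the second bullet reduces the theorem to its own key identity rather than establishing it.
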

	
	Next we turn to conformal geodesics. These are a distinguished class of curves in $(X,[h])$, singled out purely by the conformal structure. Fix a representative Riemannian metric $h \in [h]$; a curve $\gamma \colon I \to X$ is a conformal geodesic if it solves the third order equation:
	\begin{equation}\label{CGE-third-order-form}
		\nabla_{\dg} \ddot{\gamma} = 3 \frac{h\left(\dg, \ddot{\gamma}\right)}{|\dg|^2}\ddot{\gamma} -\frac{3}{2} \frac{|\ddot{\gamma}|^2}{|\dg|^2}\dg + |\dg|^2 h^{-1}\left(P_h(\dg)\right) -2 P_h\left(\dg,\dg \right) \dg.
	\end{equation}
	One can check that this definition is conformally invariant: the transformation laws for $\nabla$ and $P_h$ under change of representative metric $h$ ensure that if $\gamma$ solves~\eqref{CGE-third-order-form} for $h$ it also solves it for any metric conformal to $h$. 
	
	Again, in this form~\eqref{CGE-third-order-form} appears mysterious and again there is a concise interpretation of the equation using tractor calculus (given in \cite{bailey_thomass_1994} and also outlined briefly in the main body of the article). Our next result is an alternative geometric interpretation of conformal geodesics from the point of view of the ambient surface. In the following, $\gamma \colon I \to X$ is a conformally parametrised curve and we choose $U \colon I \times [0,1) \to Y$ to be an extension parametrising the ambient surface $\Sigma_\gamma$ which is isometric to $\Oc(t^3)$, as in Theorem~\ref{isometric-parametrisation}. Write $K$ for the second fundamental form of $\Sigma_\gamma$. We treat $|K|$ as a function on $I \times [0,1)$ via the parametrisation $U$.

	\begin{Theorem}
		The ambient surface is asymptotically totally geodesic: $|K| = \Oc(t^2)$. Moreover, $|K| = \Oc(t^3)$ if and only $\gamma$ is a conformal geodesic.
	\end{Theorem}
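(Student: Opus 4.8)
The plan is to reduce the statement to a single curvature identity via the Gauss equation, and then to feed in the two asymptotic expansions already at our disposal: the Fefferman--Graham expansion~\eqref{FG-expansion} of $g$, and the isometric normal form of Theorem~\ref{isometric-parametrisation}. Since $\gS_\gamma$ is a critical point of renormalised area it is minimal, so its mean curvature vanishes and, in a $g$-orthonormal frame $e_1,e_2$ of $T\gS_\gamma$, one has $K(e_2,e_2)=-K(e_1,e_1)$. The Gauss equation for the $2$-plane $T\gS_\gamma\subset TY$ then collapses to
\begin{equation}\label{gauss-reduction}
  |K|^2 = 2\bigl(\,\mathrm{Sec}_Y(T\gS_\gamma) - \kappa\,\bigr),
\end{equation}
where $\mathrm{Sec}_Y(T\gS_\gamma)$ is the ambient sectional curvature of the tangent plane and $\kappa$ is the intrinsic Gauss curvature of $\gS_\gamma$. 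Everything reduces to expanding the right-hand side as $t\to 0$.

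I would compute the two terms separately. For $\kappa$, use Theorem~\ref{isometric-parametrisation} to parametrise $\gS_\gamma$ by $U$ with $U^*g = g_{\mathrm{hyp}} + E$, where $|E|_{g_{\mathrm{hyp}}}=\Oc(t^2)$ in general and $\Oc(t^3)$ precisely when $u$ is a conformal parametrisation. Since $g_{\mathrm{hyp}}$ has constant curvature $-1$, we get $\kappa = -1 + \delta\kappa$, with $\delta\kappa$ a universal expression in the jets of $E$ coming from the Gauss curvature functional about $g_{\mathrm{hyp}}$. In parallel I would compute $\mathrm{Sec}_Y(T\gS_\gamma)$ from~\eqref{FG-expansion}: the Einstein condition makes $g$ hyperbolic to leading order, so $\mathrm{Sec}_Y = -1 + \delta S$, and the first nontrivial correction $\delta S$ is controlled by $h_2 = -P_{h_0}$, the Schouten tensor of the chosen representative, evaluated on the (nearly vertical) plane $T\gS_\gamma$. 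This is the step that injects the Schouten term of~\eqref{CGE-third-order-form} into the final answer.

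The first assertion is then a cancellation in~\eqref{gauss-reduction}: both $\kappa$ and $\mathrm{Sec}_Y(T\gS_\gamma)$ equal $-1$ together with an identical $\Oc(t^2)$ term, since near the boundary $\gS_\gamma$ is asymptotic to a totally geodesic copy of $\mathbb{H}^2$ and the leading deviations of the intrinsic and the ambient curvatures both originate from the same $\rho^2$-term of~\eqref{FG-expansion} seen through the almost-vertical tangent plane. Hence $\mathrm{Sec}_Y(T\gS_\gamma)-\kappa = \Oc(t^4)$ and~\eqref{gauss-reduction} gives $|K|=\Oc(t^2)$. This matching of the $\Oc(t^2)$ terms is already somewhat delicate, as it uses the minimal surface equation (to pin down $T\gS_\gamma$ to the relevant order) together with the normal form of Theorem~\ref{isometric-parametrisation} simultaneously.

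For the sharp statement I would extract the coefficient of $t^4$ in $\mathrm{Sec}_Y(T\gS_\gamma)-\kappa$, so that $|K|^2 = c\,t^4 + \Oc(t^5)$ by~\eqref{gauss-reduction}. The main obstacle is to identify $c$ as a positive multiple of the squared norm of the conformal geodesic operator, i.e.\ of the difference of the two sides of~\eqref{CGE-third-order-form}; once this is done, $|K|=\Oc(t^3)$ holds exactly when that operator vanishes, which is the definition of a conformal geodesic. Here Theorem~\ref{isometric-parametrisation} does much of the work: a conformal parametrisation gives $E=\Oc(t^3)$, which suppresses the purely intrinsic contribution to $c$ and leaves the extrinsic/ambient part, and the latter reorganises --- using $h_2=-P_{h_0}$ and the transformation laws that render both~\eqref{CGE-third-order-form} and~\eqref{gauss-reduction} conformally invariant --- into exactly the conformal geodesic combination. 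The subtle point, visible already in the flat model where conformal circles (not only straight lines) must satisfy $|K|=\Oc(t^3)$, is that the naive $\ddot\gamma$-contribution never appears alone but only inside the invariant combination of~\eqref{CGE-third-order-form}; verifying this precise combination, rather than merely its order of vanishing, is the crux of the computation.
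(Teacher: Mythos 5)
Your starting point --- the Gauss equation for a minimal surface, which for $\mathrm{Tr}\,K=0$ collapses to $|K|^2=2\bigl(\mathrm{Sec}_Y(T\gS_\g)-\kappa\bigr)$ --- is a correct identity and a genuinely different route from the paper, which never invokes Gauss--Codazzi: it computes $K$ directly in the coordinate frame of the expansion \eqref{3- Asymptotic surface, expansion}, using an explicit asymptotic normal frame (Lemma~\ref{3- Lemma: Normal vector fields}) and the Christoffel symbols of \eqref{3- FG Third Normal Form2}, arriving at $K(\Phi)=t^{-3}\bigl[t^2\left(\begin{smallmatrix}0&1\\1&0\end{smallmatrix}\right)h(\phi,\cdot)+\Oc(t^3)\bigr]$ with the conformal geodesic operator sitting visibly in the off-diagonal entry (Corollary~\ref{3- Crllr: extrinsic curvature}); a second, tractor-based proof is given in Section~\ref{subsubsec: Extrinsic curvature in the ambient metric}. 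However, as written your proposal has genuine gaps. The crux of the theorem is precisely the step you defer: identifying the $t^4$ coefficient of $2(\mathrm{Sec}_Y(T\gS_\g)-\kappa)$ with $2\,\bigl||\dg|^2\bigl(|\dg|^{-1}\nabla_{\dg}(|\dg|v)+\tfrac12|v|^2\dg-h^{-1}(P_h(\dg))\bigr)^{\perp}\bigr|^2$, and in particular showing that only the \emph{normal} part of the operator survives (the tangential part is a parametrisation condition, not a condition on $\g$). Without that computation neither direction of the ``if and only if'' is established.

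There is also a concrete order-counting obstruction you do not address. Sectional and Gauss curvatures involve two derivatives of the metric, whereas $K$ involves only one derivative of the embedding; so extracting the $t^4$ coefficient of $\mathrm{Sec}_Y(T\gS_\g)-\kappa$ a priori requires the induced metric and the ambient metric to order four, i.e.\ the Fefferman--Graham coefficient $h_4$ and the $t^4$ terms of the surface expansion. The paper deliberately never uses these (``we won't need the expansion beyond third order''), so in your scheme they must cancel between the two curvatures; your claim that everything is ``controlled by $h_2=-P_{h_0}$'' is therefore unjustified at the order you need. Relatedly, the cancellations you assert at lower order are imprecise: by Corollary~\ref{3- Crllr: Induced metric} the intrinsic curvature satisfies $\kappa=-1+\Oc(t^3)$ with \emph{no} $t^2$ term, so what you must prove is that $\mathrm{Sec}_Y(T\gS_\g)=-1+\Oc(t^3)$ as well --- a vanishing statement about the boundary Weyl data of the Poincar\'e--Einstein metric evaluated on the asymptotic tangent plane, which needs an argument --- and the conformal parametrisation does \emph{not} suppress the intrinsic $t^3$ and $t^4$ contributions to $\kappa$; those must instead cancel against matching ambient terms. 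Each of these points is plausibly true (the paper's direct computation guarantees the end result), but none is proved by the proposal, and together they constitute the entire content of the theorem.
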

	
	So a curve is a conformal geodesic precisely when its ambient surface is totally geodesic to higher order than one would normally expect to see.

	\subsection{Global questions}\label{global}
	
	As we have stressed, the discussion of the ambient surface in this article is purely local. We close this introduction with a few comments on the corresponding \emph{global} existence problem. There is great interest, both mathematically and physically, in studying genuine (as opposed to purely formal) Poincar\'e--Einstein metrics, i.e.~Einstein metrics on a compact manifold $Y$ with boundary $X$ which have the form~\eqref{conformally-compact} near $X$. A central question is the Dirichlet problem: given a conformal manifold $(X,[h])$ does it arise as the infinity of a Poincar\'e--Einstein metric on a compact manifold $Y$ with boundary $X$? In general this remains open, but there are a wealth of examples (for example \cite{graham_einstein_1991,calderbank_einstein_2004,mazzeo_gluing_2006}). One can also pose the Dirichlet problem for minimal surfaces: let $Y$ be a compact manifold with boundary $X$ and let $g$ be a Poincar\'e--Einstein metric on the interior; given a closed curve $\gamma$ in $X$, when is it possible to fill $\gamma$ by a closed minimal surface $\Sigma$? This question was answered positively by Anderson in the case of hyperbolic space itself \cite{anderson_complete_1982} and Alexakis and Mazzeo showed that for hyperbolic 3-manifolds it is possible to use degree theory to count the number of minimal solutions \cite{alexakis_renormalized_2010}. The general problem however remains open.

	A natural question in the context of this article is the following refinement of this Dirichlet problem: when is it possible to fill $\gamma$ by a closed surface $\Sigma$ which is a critical point of \emph{renormalised} area? There is, to the best of our knowledge, only one result in this direction, due to Alexakis and Mazzeo \cite{alexakis_renormalized_2010}. They prove that in hyperbolic 3-space $\mathbb{H}^3$ the only surfaces which are critical points of renormalised area are the totally geodesic copies of $\mathbb{H}^2$, which fill the closed conformal geodesics on the boundary $S^3$ (with its standard conformal structure).
	
	Clearly, curves which can be globally filled in this way are special. We can also see this from our above discussion about Dirichlet and Neumann data. Recall that in the local existence problem for minimal surfaces, we had the right to prescribe both Dirichlet data (the curve $\gamma$) \emph{and} the Neumann data, which was set to zero by the requirement that the surface be critical for renormalised area. In the global problem however, one will determine the other, just as one can prescribe either the Dirichlet or Neumann data of a harmonic function on the disk, but not both. From this perspective, critical surfaces are the zeros of a Dirichlet-to-Neumann map. Based on this, one might optimistically hope that generically critical surfaces are isolated (the above case of $\mathbb{H}^3$ is exceptional due to the abundance of symmetries) and even, in some circumstances, for there to be finitely many of them. In any case, one might reasonably expect the critical surfaces and their corresponding boundary curves to form an interesting collection of objects, pertinent to the study of both the Riemannian geometry of $Y$ and the conformal geometry of $X$. 
	
	\subsection{Acknowledgements}
	
	We would like to thank Robin Graham for helpful discussions about conformal geodesics. This work was supported by the Fonds Wetenschappelijk Onderzoek---Vlaanderen (FWO) and the Fonds de la Recherche Scientifique---FNRS under EOS Project Number 30950721 ``Symplectic Techniques''. JF was also supported by ERC consolidator grant ``SymplecticEinstein'' 646649. YH was also supported by an FNRS  \emph{charg\'e de recherche} fellowship.
	
	\section{Conformal geometry and conformal geodesics}\label{Sec: Conformal geometry and conformal geodesics}

	In this section we review well known facts about conformal geometry and conformal geodesics. This will serve mainly to fix the conventions that we will use in the following sections.
	
	\subsection{Conformal manifolds}	
	Let $X$ be an $n$-dimensional manifold. Recall that the bundle of $1$-densities $|\bigwedge|X$ is the real line bundle associated to the frame bundle of $X$ with respect to the representation $M \mapsto |det(M)|^{-1}$ of $GL(n)$. This bundle is always trivial. If $X$ is orientable 1-densities coincide with $n$-forms but on non-orientable manifolds $1$-densities (not $n$-forms) are the right type of objects needed for integration. The \emph{bundle of scales} is $L = \left(|\bigwedge|X\right)^{-\frac{1}{n}}$. This is an oriented real line bundle over $X$ (we will only consider positive sections).
	
	We will say that $(X, \hConf)$ is a \emph{conformal manifold} if $X$ is equipped with a non-degenerate symmetric bilinear form $\hConf$ with values in $L^2$, i.e $\hConf$ is a section of $L^2 \otimes S^2 T^*X$ ($S^2$ stands for symmetric tensor product). A choice of scale $\tau \in \Gamma\left[L\right]$ then amounts to a choice of ``representative'' $h = \tau^{-2}\hConf \in \Gamma\left[S^2 T^*X \right]$.
	
\subsection{Conformal geodesics}\label{subsec: Conformal geodesics}

Let $(X , \hConf)$ be an $n$-dimensional conformal manifold. We  briefly review some facts about conformal geodesics when $n>2$, the particular case $n=2$ is also discussed at the end of this subsection. Our main references are \cite{bailey_conformal_1990}, \cite{bailey_thomass_1994} and \cite{tod_examples_2012}, see also \cite{eastwood_uniqueness_2014}. 

Let $\gamma \from \R \to X$ be a curve in $(X, \hConf)$ parametrised by $s$. It is a \emph{parametrised conformal geodesic} if and only if it satisfies the following third order differential equation :
\begin{equation}\label{1- Conformal Geodesic Equation}
F\left(\gamma , h\right) :=\nabla_{\dg} \ddot{\gamma} - 3 \frac{h\left(\dg, \ddot{\gamma}\right)}{|\dg|^2}\ddot{\gamma} +\frac{3}{2} \frac{|\ddot{\gamma}|^2}{|\dg|^2}\dg - |\dg|^2 h^{-1}\left(P_h(\dg)\right) +2 P_h\left(\dg,\dg \right) \dg =0.
\end{equation}
 Here $h$ is a choice of representative for $\hConf$, $\nabla$ the associated Levi-Civita connection, $\dg := \partial_s \gamma$, $\ddot{\gamma} := \nabla_{\dg}\dg$, and $P_h = \frac{1}{n-2}Ric_h\big|_{0} + h\; \frac{1}{2n(n-1)} R_h$ is the Schouten tensor of $h$ (here and everywhere in this article $\big|_{0}$ denotes the ``trace-free part'' of a tensor) . One can show that for any non-vanishing function $\gO \in \mathcal{C}^{\infty}\left(X\right)$ one has $F\left(\gamma , \gO^{2}h\right) = F\left(\gamma , h\right)$ (this follows from the conformal transformation rules for the Levi-Civita connection and the Schouten tensor), see \cite{bailey_conformal_1990} for a proof. The vanishing of \eqref{1- Conformal Geodesic Equation} therefore does not depend on the choice of representative for $[h]$.

 It is also shown in \cite{bailey_conformal_1990} that the normal part of \eqref{1- Conformal Geodesic Equation} (i.e.\ the component which is orthogonal to~$\dg$) does not depend on the chosen parametrisation.  We thus call (unparametrised) \emph{conformal geodesics} those curves which satisfy the normal projection of \eqref{1- Conformal Geodesic Equation}. They can be thought as analogues of geodesics from Riemannian geometry with the normal part of \eqref{1- Conformal Geodesic Equation} corresponding to the geodesic equations.

In Riemannian geometry there is a natural arc-length parametrisation for curves (unique up to a remaining linear reparametrisation freedom). The tangential part of \eqref{1- Conformal Geodesic Equation}, $h\left(\dg , F\left(\gamma, h\right)\right) =0$, gives the analogue for conformal geometry: as is shown in \cite{bailey_conformal_1990} one can always (at least locally) choose a parametrization such that this equation is satisfied. The remaining freedom in reparametrisation is then the projective linear group
\begin{equation}
s \mapsto \frac{as+b}{cs+d}, \qquad\text{with} \begin{pmatrix}
a & b\\c & d
\end{pmatrix} \in PSL(2, \R).
\end{equation}
We will say that curves satisfying the tangential part of \eqref{1- Conformal Geodesic Equation} have been given a \emph{preferred conformal parametrisation}.

For most purposes, Equation \eqref{1- Conformal Geodesic Equation} is cumbersome. Following \cite{tod_examples_2012}, we introduce an auxiliary vector field $v$ and \eqref{1- Conformal Geodesic Equation} is found to be equivalent to
 \begin{subequations}
 	\begin{align}[left = \empheqlbrace\,]
 	 v &=\nabla_{\dg}\left(|\dg|^{-2} \dg \right)  \label{1- Eq: First Conformal Geodesic Eq}\\
 	0 &=|\dg|^{-1}\nabla_{\dg}\left(|\dg|v\right) + \frac{1}{2}|v|^2 \dg - h^{-1}\left( P_h(\dg) \right) . \label{1- Eq: Second Conformal Geodesic Eq}
 	\end{align}
 \end{subequations} 
This is this form that we will mainly use in the following. It will be especially well adapted when we come to tractors but it will also appear naturally in the holographic discussion.

We now briefly discuss the $n=2$ case. Since the Schouten tensor is only well defined for $n>2$ the same is true for the conformal geodesic equations \eqref{1- Conformal Geodesic Equation}. One can however extend \eqref{1- Conformal Geodesic Equation} to the two dimensional case by making a choice of M\"obius structure (see \cite{calderbank_mobius_2006}). A pragmatic point of view on M\"obius structures is that they amounts to a choice of traceless symmetric tensor $P_0$ whose behaviour under change of representatives of the conformal class mimics that of the traceless Ricci tensor $Ric_h|_0$. In the following we assume that we have made a such a choice and all formula then straightforwardly extend to the case $n=2$ by replacing the (missing) trace-free Schouten tensor by this $P_0$. See however \cite{calderbank_mobius_2006}, \cite{burstall_conformal_2010} for more details on the underlying geometry.

	\section{The ambient surface and conformal geodesics}\label{Sec: Holographic Prescription for Conformal Geodesics}
	
	\subsection{The Poincar\'e--Einstein metric of a conformal manifold}

Let $(X , \hConf)$ be an $n$-dimensional conformal manifold. Let $(Y , g)$ be the associated Poincar\'e--Einstein (formal) manifold defined in terms of the Fefferman--Graham expansion \cite{fefferman_conformal_1985}, \cite{fefferman_ambient_2012}. By construction, $Y$ is the interior of a $n+1$-dimensional compact manifold $\bar{Y}$ with boundary $\partial \bar{Y} = X$ and if $x \from \bar{Y} \to \R$ is boundary defining function, $X= x^{-1}(0)$, $dx|_X \neq 0$ then $\bar{g} = x^2 g$ smoothly extends to $\bar{Y}$. The leading order of the expansion for $g$ is
\begin{equation}\label{3- FG Third Normal Form2}
g = \frac{1}{x^2} \left(dx^2 + \left[ h - x^2 P + \Oc\left(x^4\right) \right] \right).
\end{equation}
Where $P$ is the Schouten tensor of $h$ if $n>2$, and is given by a choice of M\"obius structure if $n=2$. 
The boundary defining function $x$ used to write down this expansion is not unique. However once we make a choice of representative $h$ one can always find $x$ such that the expansion is of the form \eqref{3- FG Third Normal Form2} and this fixes $x$ uniquely, see \cite{fefferman_conformal_1985}, \cite{fefferman_ambient_2012}. Such boundary defining functions are called ``special'' or ``geodesic''.

In what follows we won't need the precise form of the expansion beyond third order, so that for the rest of this work Equation \eqref{3- FG Third Normal Form2} (formally) defines the metric $g$.

\subsection{The ambient surface of a conformal curve}\label{subsec: Dual surface to a conformal curve}

We now take $\g \from [0,1] \to X$ to be a curve in $X$ parametrised by a parameter $s$.
\begin{Definition}\label{3- Def: Surface with asymptotic boundary}
	 We will say that an embedding $\gS \from [0,1] \times [0,1) \to \bar{Y}$ is a \emph{surface with asymptotic boundary $\g$} if 
	 \begin{itemize}
	 	\item $\gS\left(s, 0\right)= \g(s)$,
	 	\item the image of $\gS$ intersects the asymptotic boundary at right angles, $\gS \perp X$.
	 \end{itemize}
Orthogonality in the second point is taken with respect to the metric $\bar{g}$. This metric is only defined up to scale, but orthogonality does not depend on the particular choice of scale. In what follows we will frequently abuse notation by using $\gS$ to denote the restriction to $[0,1]\times(0,1)$, whose image lies in the interior $Y$.
\end{Definition}

Let $x$ be a choice of boundary defining function. If $\gS$ is a surface with asymptotic boundary $\g$, we take $\gS_{\epsilon}$ to be defined by $\gS_{\epsilon} \coloneqq \gS \cap \left\{ x \geq \epsilon \right\}$ and its boundary to be $\gamma_{\epsilon} = \partial \gS_{\epsilon} $. As is explained in \cite{graham_conformal_1999}, the area of $\gS_{\epsilon}$ diverges as the length of $\g_{\epsilon}$, one can however ``cut off'' this diverging part to obtain the renormalised area.
\begin{Lemma}{(from \cite{graham_conformal_1999})}
Let $\mathcal{A}\left(\gS \right)$ be a surface $\gS$ with asymptotic boundary $\g$ and let $dA$, $dl$ be the volume forms induced by the Poincar\'e--Einstein metric $g$ on $\gS$ and $\g_{\epsilon}$. The limit
\begin{equation}\label{3- Renormalised Area}
\mathcal{A}\left(\gS \right) = lim_{\epsilon \to 0} \left( \int_{\gS_{\epsilon}} dA - \int_{\g_{\epsilon}} dl \right).
\end{equation}
is finite and does not depends on the choice of boundary defining function. The resulting limit $\mathcal{A}\left(\gS \right)$ is called the renormalized area of $\gS$.
\end{Lemma}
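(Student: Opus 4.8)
The plan is to establish the two claims of the Lemma separately: first that the limit in \eqref{3- Renormalised Area} exists and is finite, and second that its value is independent of the choice of special boundary defining function $x$. Throughout I will work with the expansion \eqref{3- FG Third Normal Form2} for $g$ and exploit the fact that a surface with asymptotic boundary $\g$ meets $X$ orthogonally, which controls the leading asymptotics of $\gS$ near $x=0$.

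\medskip

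\noindent\emph{Finiteness.}
First I would parametrise $\gS$ near the boundary in a way adapted to the special boundary defining function $x$, writing the embedding as a graph over the $(s,x)$ coordinates with $\gS(s,0)=\g(s)$. The orthogonality condition $\gS\perp X$ forces the curve $s\mapsto\gS(s,x)$ to leave the boundary with no tangential velocity at first order in $x$, so that the induced metric $g|_{\gS}$ has, using \eqref{3- FG Third Normal Form2}, the schematic form $g|_{\gS}=x^{-2}\big(dx^2 + (|\dg|^2_h + \Oc(x^2))\,ds^2 + \cdots\big)$. From this I would compute the area density $dA$ and expand it in powers of $x$:
\begin{equation}
dA = \left(\frac{1}{x^2}\,|\dg|_h + a_0 + \Oc(x)\right) dx\,ds,
\end{equation}
where $a_0$ is a smooth function of $s$ determined by the curvature terms in the expansion and by the subleading behaviour of $\gS$. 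Integrating the $x^{-2}$ piece over $x\ge\epsilon$ produces a term $\epsilon^{-1}\int |\dg|_h\,ds + \Oc(1)$, and the Poincar\'e--Einstein metric restricted to the boundary slice $\g_\epsilon$ gives $\int_{\g_\epsilon} dl = \epsilon^{-1}\int|\dg|_h\,ds + \Oc(\epsilon)$ by the analogous one-dimensional expansion. The divergent $\epsilon^{-1}$ terms therefore cancel exactly in the difference $\int_{\gS_\epsilon}dA - \int_{\g_\epsilon}dl$, leaving a finite limit as $\epsilon\to0$. I would want to check that no $\log\epsilon$ term appears; this is where the evenness of the $x$-powers in \eqref{3- FG Third Normal Form2} (and the orthogonality condition eliminating an odd-order term) is used.

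\medskip

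\noindent\emph{Independence of boundary defining function.}
Let $x$ and $\tilde x$ be two special boundary defining functions, so $\tilde x = e^{\phi} x$ for some smooth $\phi$ on $\bar Y$ with $\phi|_X$ encoding the change of representative $h\mapsto e^{2\phi|_X}h$. The two cutoff regions $\{x\ge\epsilon\}$ and $\{\tilde x\ge\epsilon\}$ differ by a thin region near the boundary, and the counterterm integrals $\int_{\g_\epsilon}dl$ are computed along different slices. I would compare the two renormalised areas by analysing the difference of the area integrals over the symmetric difference of $\{x\ge\epsilon\}$ and $\{\tilde x\ge\epsilon\}$ together with the difference of counterterms, and show that all contributions either cancel or vanish as $\epsilon\to0$. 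The cleanest route is to note that the integrand of the divergent part is a total contribution governed entirely by the boundary data $|\dg|_h$ and $\phi|_X$, and that the mismatch between area divergence and the length counterterm is arranged precisely so that the finite part is insensitive to the reparametrisation $x\mapsto\tilde x$; this is the content of the renormalisation in \cite{graham_conformal_1999}.

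\medskip

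\noindent I expect the main obstacle to be the second part, the invariance under change of boundary defining function. The finiteness is a routine asymptotic expansion once the orthogonality condition is used to kill the dangerous subleading term, but proving that the constant term $\mathcal{A}(\gS)$ is genuinely independent of $x$ requires carefully tracking how both the region of integration \emph{and} the counterterm slice $\g_\epsilon$ deform under $x\mapsto\tilde x$, and confirming that the finite contributions coming from these two deformations cancel. Since this is precisely the statement attributed to Graham and Witten in \cite{graham_conformal_1999}, I would ultimately invoke their result rather than reprove it from scratch, reducing my task to verifying that the hypotheses of their theorem---a surface meeting $X$ orthogonally in a Poincar\'e--Einstein background---are met here.
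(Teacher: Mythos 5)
The paper does not actually prove this lemma: it is imported wholesale from Graham--Witten, with the citation standing in for the argument. Your sketch is therefore not competing with a proof in the paper but reconstructing the cited one, and it reconstructs it correctly. The finiteness part is exactly the standard computation: orthogonality kills the order-$x$ tangential coefficient of the graph describing $\Sigma$ near $X$, so the area density is $x^{-2}\left(|\dot\gamma|_h + \mathcal{O}(x^2)\right)dx\,ds$ with no $x^{-1}$ term, whence no $\log\epsilon$ and an exact cancellation of the $\epsilon^{-1}$ divergence against $\int_{\gamma_\epsilon}dl = \epsilon^{-1}\ell_h(\gamma)+\mathcal{O}(\epsilon)$ (this is consistent with the expansion the paper later derives in Lemma~\ref{3- Lemma: Conformal parametrisation} and Corollary~\ref{3- Crllr: Induced metric}). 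Two points deserve care if you were to write this out in full. First, the invariance part is only gestured at; the honest statement is that for a one-dimensional boundary curve the renormalised area sits in the ``no anomaly'' case of Graham--Witten, and the finite term survives the change $x\mapsto\tilde x = e^{\phi}x$ because the would-be variation of the constant term is controlled by the $\epsilon^{-1}$ coefficient, which is a length density of the correct conformal weight. Second, Graham--Witten's theorem is stated for \emph{minimal} submanifolds, whereas the lemma here (and Definition~\ref{3- Def: Surface with asymptotic boundary}) only assumes the surface meets $X$ orthogonally; when you ``verify the hypotheses,'' you should note that minimality is not needed at this order --- orthogonality alone forces the vanishing of the dangerous first-order coefficient --- or else restrict to the critical surfaces that the paper actually uses later. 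Neither point is a gap relative to the paper, which proves nothing here, but they are the places where a self-contained write-up would need detail.
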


The next theorem is the main result of this subsection. It allows us to associate to a curve $\g$ in $X$ a unique ambient surface $\gS_{\g}$ in $Y$.
\begin{Theorem}\label{3- Thrm: Holographic dual}\mbox{}\\
Let $\g$ be a curve in $\left(X ,[h]\right)$, let $\left(Y , g \right)$ be the (formal) associated Poincaré--Einstein manifold \eqref{3- FG Third Normal Form2}, then there is a unique (formal) surface $\gS_{\g}$ in $Y$ which both is a critical point of the renormalised area \eqref{3- Renormalised Area} and has asymptotic boundary $\g$.

What is more, this surface can be asymptotically described by the expansion,
\vspace*{-0.6cm}
\begin{adjustwidth}{-9pt}{-0pt} 
\begin{equation}\label{3- Holographic dual, expansion}
\gS_{\g} = \left\{\begin{array}{ccccccccccc}
x\left(s,t\right) & =& 0 &+& t |\dg| &+& 0 &+&\frac{t^3 }{3}\left(-\frac{3}{4}|\dg|^3|v|^2 + \frac{1}{2} |\dg| \kappa(\g,v, h) \right) &+& \Oc\left(t^4\right) \\ \\
y^i\left(s,t\right) & =& \g^i(s) &+& 0  &+& \frac{t^2}{2} |\dg|^2 v^i &+& 0 &+& \Oc\left(t^4\right)
\end{array} \right\}
\end{equation}
\end{adjustwidth}
where $v$ satisfies the first conformal geodesic equation \eqref{1- Eq: First Conformal Geodesic Eq}:
\begin{equation}\label{3- leading minimal surface eq }
v =\nabla_{\dg}\left(|\dg|^{-2} \dg \right)
\end{equation}
and
\begin{equation}\label{3- Conformal Parametrisation Def1}
\kappa(\g,v, h) \coloneqq h\left(\dg , |\dg|^{-1}\nabla_{\dg}\left(|\dg|v\right) + \frac{1}{2}|v|^2 \dg - h^{-1}\left( P_h(\dg) \right)  \right)
\end{equation}
where the right-hand-side of \eqref{3- Conformal Parametrisation Def1} is the tangential part of the right-hand-side of the second conformal geodesic equation \eqref{1- Eq: Second Conformal Geodesic Eq}.

The expansion \eqref{3- Holographic dual, expansion} is not unique but can be invariantly defined as a choice of conformal parametrisation up to order three, that is such that the induced metric is, in the coordinate basis $\{\partial_s, \partial_t\}$,
\begin{equation}\label{3- Induced metric1}
\gS^*g = f\left(s,t\right)\left[\begin{pmatrix}
1 & 0 \\0 & 1
\end{pmatrix} + \Oc(t^3)\right].
\end{equation}
\end{Theorem}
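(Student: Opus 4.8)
The strategy is to separate the interior and boundary content of the variational problem. Since a compactly supported variation of $\gS$ changes only the finite part $\mathcal{A}$ of the area (cf.\ \eqref{area-expansion}), any critical point of the renormalised area \eqref{3- Renormalised Area} must be a minimal surface, and criticality under variations that also move $\g$ will supply a single boundary (Neumann) condition. I would therefore first write the vanishing mean curvature equation for a graph $(s,t)\mapsto\big(x(s,t),y^i(s,t)\big)$ in the coordinates of \eqref{3- FG Third Normal Form2}, imposing the two requirements of Definition~\ref{3- Def: Surface with asymptotic boundary}. Reading orthogonality off the compactified metric $\bar g=x^2g$ forces $\partial_t y^i|_{t=0}=0$; normalising $t$ so that $\partial_t x|_{t=0}=|\dg|$ then fixes the $t^0$ and $t^1$ terms of \eqref{3- Holographic dual, expansion}.

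Next I would feed this ansatz into the minimal surface equation and solve order by order in $t$. The linearisation about the model totally geodesic $\mathbb{H}^2\subset\mathbb{H}^{n+1}$ has indicial roots $0$ and $3$ (established in \cite{alexakis_renormalized_2010,graham_higher-dimensional_2017}): root $0$ carries the Dirichlet datum $\g$, while root $3$ carries a free normal ``Neumann'' datum along $\g$. At order $t^2$ the equation is inhomogeneous and forces the coefficient of $y^i$ to be $\tfrac12|\dg|^2v^i$ with $v=\nabla_{\dg}(|\dg|^{-2}\dg)$; this is precisely the first conformal geodesic equation \eqref{1- Eq: First Conformal Geodesic Eq}. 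The Schouten (or M\"obius) term $-x^2P$ in \eqref{3- FG Third Normal Form2} first enters the minimal surface equation at order $t^3$, so that the curvature of $h$ appears only in the order-$t^3$ coefficients.

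The crux---and the step I expect to be the main obstacle---is showing that criticality for \emph{renormalised} area is equivalent to vanishing of the normal root-$3$ datum. I would compute the first variation of \eqref{3- Renormalised Area} for a general variation that moves $\g_\epsilon$, keeping careful track of the cut-off at $x=\epsilon$ and of the subtracted length $\int_{\g_\epsilon}dl$. The delicate point is that the individually divergent contributions must cancel as $\epsilon\to0$, leaving a finite boundary pairing between the variation field and exactly the normal coefficient at order $t^3$. Demanding that this vanish for all boundary variations kills the Neumann datum; together with the indicial analysis this determines every remaining coefficient, yielding existence and uniqueness of $\gS_\g$ (to all orders when $n$ is odd, and to the order of the Fefferman--Graham metric when $n$ is even). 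In particular the normal part of the order-$t^3$ coefficient of $y^i$ vanishes, as displayed in \eqref{3- Holographic dual, expansion}.

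It remains to explain the quoted coefficients and the invariant characterisation \eqref{3- Induced metric1}. Here one uses that the surface $\gS_\g$ is unique but its parametrisation is not, the residual freedom being a reparametrisation of the $(s,t)$ half-plane. Since every $2$-dimensional metric is conformally flat, this freedom can be fixed by demanding that the induced metric be conformally flat to order $t^3$, i.e.\ of the form \eqref{3- Induced metric1}; to leading order $\gS^*g$ agrees with $g_{\mathrm{hyp}}$, matching Theorem~\ref{isometric-parametrisation}. Imposing \eqref{3- Induced metric1} fixes the tangential part of the order-$t^3$ data, and expanding the $ds^2-dt^2$ and $ds\,dt$ components of $\gS^*g$ shows that the order-$t^3$ correction to $x$ is governed by the combination $\kappa(\g,v,h)$ of \eqref{3- Conformal Parametrisation Def1}, in which the Schouten tensor enters through the ambient metric. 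This simultaneously reproduces the coefficient in \eqref{3- Holographic dual, expansion} and identifies the gauge as a conformal parametrisation to order three; the remaining verification is routine Taylor bookkeeping.
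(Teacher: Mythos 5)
Your proposal follows the same route as the paper: the authors likewise split the argument into a gauge-fixing lemma (demanding the induced metric be conformal to the flat one up to $\Oc(t^3)$, which forces the form of the expansion and produces $\kappa$), a compactly-supported-variation lemma (minimality, indicial roots $0$ and $3$, all coefficients determined except $\g$ and the normal part of the order-$t^3$ datum $n$, with $v=\nabla_{\dg}(|\dg|^{-2}\dg)$ appearing at leading order of the mean curvature), and a boundary-variation lemma showing the first variation of renormalised area equals $-\int_{\g}ds\,|\dg|^{-2}h(\phi,n)$ after the divergent terms cancel, so criticality kills the Neumann datum. The decomposition, the key computations you flag as delicate, and the conclusions all match the paper's proof.
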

 We will say that the surface $\gS_{\g}$ given by the above theorem is the ambient surface associated to the curve $\g$. In the following we will not use the details of $\gS_{\gamma}$ beyond third order in the parameter~$t$, consequently we could just as well have taken the expansion \eqref{3- Holographic dual, expansion} as our definition for the ambient surface.

We break down the proof of the above theorem into three lemmas from which it is a direct consequence.
\begin{Lemma}\label{3- Lemma: Conformal parametrisation}
Let $\left(X , \hConf \right)$ be a conformal manifold, let $\left(Y , g \right)$ be the associated Poincaré--Einstein manifold and let $x$ be a boundary defining function such that $g$ satisfies \eqref{3- FG Third Normal Form2}. Let $\g$ be a curve in $X$ and let $\gS$ be a surface in $Y$ with asymptotic boundary $\g$, then $\gS$ can always be described by the expansion
\vspace*{-0.4cm}
\begin{adjustwidth}{-5pt}{-0pt} 
\begin{equation}\label{3- Asymptotic surface, expansion}
\gS = \left\{\begin{array}{ccccccccccc}
x\left(s,t\right) & =& 0 &+& t |\dg| &+& 0 &+&\frac{t^3 }{3}\left(-\frac{3}{4}|\dg|^3|v|^2 + \frac{1}{2} |\dg| \kappa(\g, v, h) \right) &+& \Oc\left(t^4\right) \\ \\
y^i\left(s,t\right) & =& \g^i(s) &+& 0  &+& \frac{t^2}{2} |\dg|^2 v^i &+& \frac{t^3}{3}\; n^i &+& \Oc\left(t^4\right)
\end{array} \right\}
\end{equation}
\end{adjustwidth}
where $v$ and $n$ respectively satisfy $h\left(\dg , v \right) =  h\left(\dg , \nabla_{\dg}\left(|\dg|^{-2} \dg \right) \right)$, $h\left(\dg, n\right)=0$ and $\kappa(\g,v, h)$ is given by \eqref{3- Conformal Parametrisation Def1} (note that $v$ is however not supposed to satisfy \eqref{3- leading minimal surface eq }).

This description is not unique but can be invariantly defined as a choice of conformal parametrisation up to order three,
\begin{equation}\label{3- Induced metric2}
\gS^*g = f(s,t) \left[ \begin{pmatrix}
1 & 0 \\0 & 1
\end{pmatrix}  + \Oc(t^3) \right].
\end{equation} 
\end{Lemma}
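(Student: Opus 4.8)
The plan is to set up an explicit asymptotic expansion for a general surface $\gS$ with asymptotic boundary $\g$, using the special boundary defining function $x$ from~\eqref{3- FG Third Normal Form2} together with the coordinates $y^i$ on $X$. Since $\gS$ meets $X$ at right angles along $\g$, the natural parametrisation uses $(s,t)$ where $t$ is a geodesic-type parameter in the normal direction. First I would write $x(s,t) = a_1(s)\,t + a_2(s)\,t^2 + a_3(s)\,t^3 + \Oc(t^4)$ and $y^i(s,t) = \g^i(s) + b_1^i(s)\,t + b_2^i(s)\,t^2 + b_3^i(s)\,t^3 + \Oc(t^4)$, noting that the boundary condition forces $x(s,0)=0$ and $y^i(s,0)=\g^i(s)$, so there are no zeroth-order terms in $x$ and no constant term beyond $\g^i$ in $y^i$.

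The core of the argument is to impose the conformal-parametrisation normalisation~\eqref{3- Induced metric2}, namely that the pullback metric $\gS^* g$ be conformal to the flat model $\mathrm{diag}(1,1)$ up to $\Oc(t^3)$. I would compute $\gS^* g$ by substituting the expansion into~\eqref{3- FG Third Normal Form2}, being careful that the overall $x^{-2}$ factor makes the leading behaviour hyperbolic: $g_{ss}$, $g_{tt}$, $g_{st}$ each pick up a factor $x^{-2}\sim (a_1 t)^{-2}$. The requirement that $g_{tt}=g_{ss}$ to leading order and $g_{st}=\Oc(t^3)$ (after the conformal factor $f$) gives, order by order in $t$, a sequence of algebraic/differential relations on the coefficients $a_k, b_k^i$. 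I expect the normal condition $\gS\perp X$ together with the leading isometry to fix $a_1 = |\dg|$ and $b_1^i=0$; the $g_{st}=\Oc(t^3)$ condition at the next orders then fixes $a_2=0$ and identifies $b_2^i = \tfrac{1}{2}|\dg|^2 v^i$ with $v$ constrained only by its tangential component $h(\dg,v)=h(\dg,\nabla_{\dg}(|\dg|^{-2}\dg))$, exactly as claimed. The third-order coefficient $a_3$ should come out as the stated combination involving $|v|^2$ and $\kappa(\g,v,h)$, the latter being precisely the tangential piece of~\eqref{1- Eq: Second Conformal Geodesic Eq}; the normal part $n^i$ of $b_3^i$ remains unconstrained at this stage (it is the Neumann-type datum fixed only later by the critical-point condition), which matches the appearance of a free $n^i$ with $h(\dg,n)=0$ in~\eqref{3- Asymptotic surface, expansion}.

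The main obstacle will be the bookkeeping in the third-order term of $a_3$: one must expand the Schouten contribution $-x^2 P$ in~\eqref{3- FG Third Normal Form2} to the relevant order and track how $P_h(\dg)$ enters through the $y^i$-dependence of the metric coefficients, while simultaneously handling the $\nabla_{\dg}(|\dg| v)$ term that arises from differentiating $b_2^i$ in $s$. The delicate point is disentangling which contributions to $g_{st}$ and $g_{tt}$ are pure reparametrisation freedom (absorbable into $f(s,t)$ and into the choice of $t$) versus which are genuine geometric constraints; I would argue that the freedom in reparametrising $t\mapsto t + \Oc(t^2)$ and $s\mapsto \phi(s)$ accounts for exactly the non-uniqueness asserted in the statement, while the residual invariant content is the appearance of $\kappa(\g,v,h)$ in $a_3$.

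Finally, I would verify that the resulting expansion is consistent, i.e.\ that the normalisation~\eqref{3- Induced metric2} can indeed be achieved to order $t^3$ and not just imposed formally, by checking that the number of free coefficients matches the number of constraints at each order. The conclusion is that every surface with asymptotic boundary $\g$ admits the normal form~\eqref{3- Asymptotic surface, expansion} after a conformal reparametrisation, with $v$ determined up to its normal component and $n$ free, which is exactly what Lemma~\ref{3- Lemma: Conformal parametrisation} asserts.
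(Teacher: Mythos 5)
Your proposal follows essentially the same route as the paper: write a general Taylor expansion in $t$ for $x(s,t)$ and $y^i(s,t)$ with the boundary and right-angle conditions built in, then impose the asymptotically isothermal normalisation~\eqref{3- Induced metric2} order by order to fix the coefficients, leaving the normal part of $n^i$ free. The paper's own proof is exactly this (in fact stated even more tersely), so your plan is correct and not a different argument.
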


\begin{proof}\mbox{}\\
Let $\gS$ be a surface in $Y$ with asymptotic boundary $\g$, by Definition~\ref{3- Def: Surface with asymptotic boundary} it must have an expansion of the form:
\begin{equation}\label{3- Lemma(conformal coordinate)2}
\gS = \left\{\begin{array}{ccccccccccc}
x\left(s,t\right) & =& 0 &+& t x_1 &+& \frac{t^2}{2} x_2 &+& \frac{t^3}{3} x_3 &+& \Oc\left(t^4\right) \\ \\
y^i\left(s,t\right) & =& \g^i(s) &+& t\; \alpha(s)\; \dg^i(s)  &+& \frac{t^2}{2} |\dg|^2 v^i &+& \frac{t^3}{3}\;n^i &+& \Oc\left(t^4\right)
\end{array} \right\}.
\end{equation}
We will prove the lemma by explaining how to fix the coefficients in \eqref{3- Lemma(conformal coordinate)2} to agree with \eqref{3- Asymptotic surface, expansion}.
This is done by requiring that the coordinates used in the expansion are ``asymptotically isothermal'' i.e satisfy \eqref{3- Induced metric2}. Solving order by order Equation \eqref{3- Induced metric2} results in the expansion \eqref{3- Asymptotic surface, expansion} with $h\left(\dg , v \right) =  h\left(\dg , \nabla_{\dg}\left(|\dg|^{-2} \dg \right) \right)$, $h\left(\dg, n\right)=0$ and $ \kappa(\g, v, h)$ given by \eqref{3- Conformal Parametrisation Def1}.
\end{proof}

From the detailed derivation of the above proof one deduces the exact form of the function $f$ in \eqref{3- Induced metric2}.
\begin{Corollary}\label{3- Crllr: Induced metric}
Let $\gS$ be a surface with asymptotic boundary $\g$ in the parametrisation \eqref{3- Asymptotic surface, expansion} given by lemma \ref{3- Lemma: Conformal parametrisation} then the induced metric is, in the coordinate basis $\{\partial_s, \partial_t\}$,
\begin{equation}\label{3- Induced metric}
\gS^*g = \frac{1}{t^2}
\begin{pmatrix}
1 & 0\\ 0 & 1
\end{pmatrix}\left(1 + 0.t +  t^2\left( \frac{2}{3} \kappa(\g, v, h) \right)\right) + \Oc(t).
\end{equation}
\end{Corollary}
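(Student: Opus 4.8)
The plan is to substitute the explicit expansion \eqref{3- Asymptotic surface, expansion} directly into the ambient metric \eqref{3- FG Third Normal Form2} and simply read off the induced metric. Since Lemma~\ref{3- Lemma: Conformal parametrisation} already guarantees that in this parametrisation $\gS^*g$ has the isothermal form \eqref{3- Induced metric2}, the matrix structure is automatic: the off-diagonal entry vanishes to the relevant order and the two diagonal entries agree. It therefore suffices to compute a single entry, say $g_{tt} := \gS^*g(\partial_t,\partial_t)$, through order $t^0$; this pins down the conformal factor to the order claimed in \eqref{3- Induced metric}.

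First I would record the $t$-derivatives entering the pullback. Writing $C := -\tfrac{3}{4}|\dg|^3|v|^2 + \tfrac12|\dg|\,\kappa(\g,v,h)$ for the coefficient of the $t^3$-term of $x$ in \eqref{3- Asymptotic surface, expansion}, one has $\partial_t x = |\dg| + t^2 C + \Oc(t^3)$ and $\partial_t y^i = t|\dg|^2 v^i + t^2 n^i + \Oc(t^3)$. Because $x$ carries no $t^2$-term and $y^i$ no $t^1$-term --- both facts fixed by the isothermal normalisation of Lemma~\ref{3- Lemma: Conformal parametrisation} --- the conformal factor expands with no odd correction at first order, $x^{-2} = (t^2|\dg|^2)^{-1}\bigl(1 - \tfrac{2}{3}(C/|\dg|)\,t^2 + \Oc(t^3)\bigr)$, and this is precisely the origin of the vanishing linear term ``$+\,0\cdot t$'' in \eqref{3- Induced metric}.

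Next I would assemble $g_{tt} = x^{-2}\bigl[(\partial_t x)^2 + (h - x^2 P)(\partial_t y,\partial_t y)\bigr]$. The Schouten contribution is $\Oc(t^4)$ and drops out at this order, the vector $n$ enters only at $\Oc(t^3)$, and the numerator equals $|\dg|^2 + t^2\bigl(2|\dg|C + |\dg|^4|v|^2\bigr) + \Oc(t^3)$. Multiplying by the expansion of $x^{-2}$ and inserting the value of $C$, the $t^2$-coefficient of $t^2 g_{tt}$ collapses to $\tfrac{2}{3}\kappa(\g,v,h)$, giving $g_{tt} = t^{-2}\bigl(1 + \tfrac{2}{3}\kappa(\g,v,h)\,t^2\bigr) + \Oc(t)$, which together with the isothermal form is exactly \eqref{3- Induced metric}. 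The one genuine computation, and the point I expect to be the main obstacle, is the cancellation of the $|\dg|^2|v|^2$-terms: these arise from three competing sources --- the cross-term in $(\partial_t x)^2$, the leading value of $h(\partial_t y,\partial_t y)$, and the $\Oc(t^2)$ correction to $x^{-2}$ --- and it is exactly the coefficient $C$ fixed in Lemma~\ref{3- Lemma: Conformal parametrisation} that makes their $|v|^2$-parts sum to zero. This cancellation both produces the clean coefficient $\tfrac23\kappa(\g,v,h)$ and doubles as a consistency check that the $t^3$-term of $x$ was correctly determined.
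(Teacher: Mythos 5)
Your proposal is correct and follows essentially the same route as the paper, which simply reads the conformal factor $f$ off from the order-by-order substitution of the expansion \eqref{3- Asymptotic surface, expansion} into \eqref{3- FG Third Normal Form2} carried out for Lemma~\ref{3- Lemma: Conformal parametrisation}; your observation that the isothermal form reduces everything to the single entry $g_{tt}$, and your explicit verification that the $|\dg|^2|v|^2$-terms cancel against the $t^3$-coefficient of $x$ to leave $\tfrac{2}{3}\kappa(\g,v,h)$, supply exactly the computation the paper leaves implicit.
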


The next two lemmas are generalisations to arbitrary dimension of results established in \cite{alexakis_renormalized_2010} for $n=2$.

We first consider variations of the renormalised area \eqref{3- Renormalised Area} with compact support. Let $\g$ be a curve in $\left(X ,[h]\right)$, let $\left(Y , g \right)$ be a Poincar\'e--Einstein manifold with $g$ of the form~\eqref{3- FG Third Normal Form2}. Let $\gS(u)$ be any one dimensional family of surfaces in $Y$ with asymptotic boundary $\g$, in particular it must have fixed boundary $\partial \gS(u) = \g$. Let $\gS(u)$ start at a surface $\gS(0) = \gS$ parametrised as in~\eqref{3- Asymptotic surface, expansion}. We write $\gS(u)$ as 
\begin{equation}\label{3- Lemma(first variation)1}
\gS(u) = \gS + u \Phi + \Oc(u^2) 
\end{equation} where $\Phi \in \gG[N_{\gS}]$ is a section of the normal bundle to $\gS$ in $Y$ (more correctly $\gS(u)$ is a path of surfaces with first variation $\Phi$). Let us remark that, since $\gS(u)$ is supposed to have asymptotic boundary $\g$, we must have $\Phi\big|_X=0$. In fact, since the surfaces $\Sigma(u)$ meet $X$ at right angles, $\Phi$ vanishes to first order along $X$.

Before we get to our next lemma, let us introduce some notation. If $\gS \subset Y$ is a surface in $Y$, we will note $K$ the induced second fundamental form and $Tr K$ the mean curvature (defined as the trace of $K$).

\begin{Lemma}\label{3- Lemma: first variations with compact support}
The renormalised area of the surface $\Sigma$ is stationary for  variations fixing the boundary if and only if the surface $\gS$ is minimal, that is if the mean curvature vanishes. What is more, setting $Tr K$ to zero fixes $\gS$ (at least its formal expansion) uniquely up to the choice of the normal part of the coefficient $n$ in the expansion \eqref{3- Asymptotic  surface, expansion}. In particular the mean curvature vanishes to leading order in $t$ if and only if $v$ in \eqref{3- Asymptotic surface, expansion} satisfies the first conformal geodesic equation \eqref{1- Eq: First Conformal Geodesic Eq}:
 \begin{equation}\label{3- Minimality, leading order}
v =\nabla_{\dg}\left(|\dg|^{-2} \dg \right).
\end{equation}
\end{Lemma}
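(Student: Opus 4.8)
The plan is to establish the three assertions in turn: the variational characterisation of minimality, the identification of the leading coefficient, and the formal solvability of the minimal-surface equation up to the free Neumann datum. I would begin with the first variation of renormalised area. Writing $\gS(u) = \gS + u\Phi + \Oc(u^2)$ with $\Phi \in \gG[N_\gS]$ vanishing to first order along $X$, I would split $\mathcal{A}(\gS(u))$ into the bulk term $\int_{\gS_\epsilon(u)} dA$ and the regularising length $\int_{\g_\epsilon(u)} dl$, and differentiate in $u$ before taking $\epsilon \to 0$. For the bulk term I would use the standard first variation formula
\begin{equation*}
\frac{d}{du}\Big|_{u=0} \int_{\gS_\epsilon} dA = -\int_{\gS_\epsilon} g(H, \Phi)\, dA + \int_{\g_\epsilon}(\text{boundary term}),
\end{equation*}
where $H$ is the mean curvature vector. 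Because the boundary curve $\g$ is held fixed and $\Phi$ vanishes to first order along $X$, the divergent part of the bulk variation is cancelled against the variation of the renormalising length term, and the surviving boundary contribution vanishes in the limit. This leaves $\frac{d}{du}\big|_{u=0}\mathcal{A} = -\int_{\gS} g(H,\Phi)\, dA$, so stationarity for all admissible $\Phi$ is equivalent to $H = Tr K = 0$.

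Next, for the leading-order statement, I would feed the asymptotically isothermal expansion~\eqref{3- Asymptotic surface, expansion} into the mean curvature and expand in powers of $t$ using the form~\eqref{3- FG Third Normal Form2} of $g$. Recall from Lemma~\ref{3- Lemma: Conformal parametrisation} that the isothermal gauge already pins down the tangential part of the order-$t^2$ coefficient, $h(\dg, v) = h(\dg, \nabla_{\dg}(|\dg|^{-2}\dg))$, leaving only the normal part of $v$ free. Computing $Tr K$ to leading order in $t$, the first obstruction is a normal vector proportional to the normal projection of $v - \nabla_{\dg}(|\dg|^{-2}\dg)$. Its vanishing therefore fixes the normal part of $v$, and together with the tangential identity supplied by the isothermal gauge this is exactly the first conformal geodesic equation~\eqref{3- Minimality, leading order}.

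For the uniqueness claim I would run the order-by-order solution of $Tr K = 0$ as a formal power series in $t$. At each order the equation becomes a linear, inhomogeneous condition on the next coefficient of~\eqref{3- Asymptotic surface, expansion}, with inhomogeneity built from the lower-order data; the solvability is governed by the indicial operator of the linearised minimal-surface equation, whose roots are $0$ and $3$ (as recalled in the introduction). The root $0$ corresponds to the already-prescribed Dirichlet datum, namely $\g$ itself, while the resonance at root $3$ is precisely the order at which the normal part of the coefficient $n$ drops out of the equation and so remains undetermined. Every other coefficient is then fixed, giving a unique formal solution up to the normal part of $n$.

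The main obstacle I anticipate is the first step: carefully justifying the interchange of $\frac{d}{du}$ with the regularising limit $\epsilon \to 0$, and verifying that the boundary contributions produced both by the first-variation formula and by the moving cut-off $\{x = \epsilon\}$ cancel exactly. This is where the precise first-order rate of vanishing of $\Phi$ at $X$ (forced by the orthogonality condition) and the conformal weight of the volume form, which generates the $\epsilon^{-1}$ divergence, must be tracked with care; once this is settled, the coefficient computations in the second and third steps are a direct, if lengthy, expansion following the $n=2$ treatment of \cite{alexakis_renormalized_2010}.
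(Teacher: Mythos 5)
Your proposal is correct and follows essentially the same route as the paper: the first-variation argument reducing renormalised area to ordinary area for boundary-fixing variations, the indicial roots $0$ and $3$ giving formal uniqueness up to the normal part of $n$, and the leading-order expansion of $Tr K$ identifying the first conformal geodesic equation. The only cosmetic difference is that the paper defers the explicit leading-order computation to Corollary~\ref{3- Crllr: mean curvature} (via the expansion of $K$ in Lemma~\ref{3- Lemma: Extrinsic curvature}) and simply asserts, rather than verifies, the cancellation of divergences you describe.
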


\begin{proof}(First part of)\mbox{}\\
Since we only consider variations fixing the asymptotic boundary $\partial \gS(u) = \g$, the variation of the renormalised area \eqref{3- Renormalised Area} coincides with the variation of the area. The surface $\gS$ is thus critical for such variations if and only if its mean curvature vanishes. As was already discussed in the introduction it is known that this equation has indicial roots $0$ and $3$. (See, for example, \cite{alexakis_renormalized_2010} or \cite{graham_higher-dimensional_2017}.) This implies that one can formally solve for this equation order by order in the expansion \eqref{3- Asymptotic surface, expansion} and that the solution is unique up to a choice of terms at order $0$ and $3$. A direct computation with the parametrization \eqref{3- Asymptotic surface, expansion} then shows that these ``free'' terms are $\g$ and the normal part of $n$ (recall that, from our choice of coordinates, $h\left(\dg, n\right)=0$ ). All other terms are then uniquely constrained by solving $Tr K$ order by order. In particular one finds that $v$ must satisfy the first conformal geodesic equation \eqref{1- Eq: First Conformal Geodesic Eq}, we however postpone a precise derivation of this fact to the next section (see Corollary \ref{3- Crllr: mean curvature}).
\end{proof}

We now consider the variations that do not necessarily fix the conformal boundary. Let $\gS$ be a minimal surface, $Tr K =0$, with asymptotic boundary $\g$. Let $\g(u)$ be any one dimensional family of curves in $X$ starting at $\g(0)=\g$ and let $\gS(u)$ be any one dimensional family of surfaces in $Y$ with asymptotic boundary $\g(u)$ and starting at $\gS(0)= \gS$. 
We write $\g(u)$ as
\begin{equation}\label{3- Lemma(first variation)2}
\g(u) = \g + u \phi + \Oc(u^2) 
\end{equation}
where $\phi \in \gG[N_{\g}]$ is a section of the normal bundle to $\g$ in $X$ (as for equation \eqref{3- Lemma(first variation)1}, this means that $\gamma(u)$ is a path of curves with first variation $\phi$). 

\begin{Lemma}\label{3- Lemma: first variation with non-compact support}
The first variation of the renormalised area with respect to this family of surface is
\begin{equation}
\left.\frac{d \mathcal{A}\left( \gS_t \right) }{d t}\right|_{t=0} = -\int_{\g}ds\;|\dg|^{-2} \; h\left( \phi, n\right) .
\end{equation}
This vanishes if and only if the normal part of $n$ in \eqref{3- Asymptotic surface, expansion} vanishes and thus if and only if
\begin{equation}
n=0.
\end{equation}
\end{Lemma}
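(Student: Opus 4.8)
The plan is to compute the first variation directly from the definition \eqref{3- Renormalised Area} of renormalised area, using that $\gS=\gS(0)$ is minimal. Writing $A_\epsilon=\int_{\gS_\epsilon}dA$ and $L_\epsilon=\int_{\g_\epsilon}dl$, I would start from
\[
\left.\frac{d\mathcal{A}(\gS(u))}{du}\right|_{u=0}=\lim_{\epsilon\to 0}\left(\left.\frac{dA_\epsilon}{du}\right|_{u=0}-\left.\frac{dL_\epsilon}{du}\right|_{u=0}\right),
\]
and apply the standard first variation of area. Since the mean curvature $Tr\,K$ vanishes on $\gS$, the bulk term drops out and only the conormal flux across the cut-off curve $\g_\epsilon=\gS\cap\{x=\epsilon\}$ survives,
\[
\left.\frac{dA_\epsilon}{du}\right|_{u=0}=\int_{\g_\epsilon}g(\Phi,\eta)\,dl,
\]
where $\Phi=\partial_u\gS(u)|_{u=0}$ is the variation field, with boundary value $\phi$ from \eqref{3- Lemma(first variation)2}, and $\eta$ is the outward unit conormal (pointing towards $x=0$). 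The whole problem then becomes an asymptotic analysis of this flux together with the counterterm $dL_\epsilon/du$ as $\epsilon\to 0$.

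First I would put every member $\gS(u)$ of the family into the normal form of Lemma \ref{3- Lemma: Conformal parametrisation}, so that the coefficients of its expansion \eqref{3- Asymptotic surface, expansion} depend smoothly on $u$; differentiating in $u$ then yields the expansion of $\Phi$, whose leading boundary term is $\phi$. The crucial structural observation is that the Poincaré--Einstein metric \eqref{3- FG Third Normal Form2} is block-diagonal, with no $dx\,dy^i$ cross terms, so $\partial_x$ and the $\partial_{y^i}$ are $g$-orthogonal. Hence the pairing of the boundary value $\phi$ (which is tangent to $X$) with the leading, $\partial_x$-directed part of the conormal $\eta$ drops out, and the flux is instead fed by the subleading $y$-tilt of the surface---encoded in the coefficients $v$ and $n$ of \eqref{3- Asymptotic surface, expansion}---together with the $\partial_x$-component of $\Phi$.

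Next I would expand $g(\Phi,\eta)\,dl$ and the counterterm variation $dL_\epsilon/du$ in powers of $\epsilon$ using \eqref{3- Asymptotic surface, expansion} and the induced metric \eqref{3- Induced metric} of Corollary \ref{3- Crllr: Induced metric}, keeping terms through order $\epsilon^0$; here one trades $t$ for $\epsilon$ along $\g_\epsilon$ via $x=t|\dg|+\Oc(t^3)$, which is the source of the overall factor $|\dg|^{-2}$. Tracking powers of $\epsilon$ this way, the $v$-tilt produces an $\Oc(\epsilon^{-1})$ divergence while the $n$-tilt produces a finite contribution proportional to $|\dg|^{-2}h(\phi,n)$. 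Finiteness of $\mathcal{A}$ guarantees that the genuinely divergent pieces cancel between $dA_\epsilon/du$ and $dL_\epsilon/du$, and I expect the remaining $v$-dependent (reparametrisation-type) finite pieces---morally because $v$ is data determined by $\g$, whereas $n$ is the free Neumann datum---to be matched exactly by the variation of the length counterterm, leaving
\[
\left.\frac{d\mathcal{A}(\gS(u))}{du}\right|_{u=0}=-\int_{\g}ds\;|\dg|^{-2}\,h(\phi,n).
\]

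The main obstacle is precisely this last cancellation: organising the order-by-order expansion of the conormal flux and of $dL_\epsilon/du$ so that every divergent term and every $\delta v$-type finite term drops out, isolating a clean pairing of the Dirichlet variation $\phi$ with the Neumann coefficient $n$. Once the formula is established the remaining claim is immediate: the variation is linear in $\phi$ and $h$ is non-degenerate, so it vanishes for every admissible $\phi$ if and only if the normal part of $n$ vanishes; since the parametrisation of Lemma \ref{3- Lemma: Conformal parametrisation} already enforces $h(\dg,n)=0$, this is equivalent to $n=0$.
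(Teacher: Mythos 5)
Your proposal is correct and follows essentially the same route as the paper: a first variation of area whose bulk term vanishes by minimality, a boundary (conormal) flux term evaluated using the expansion of the normal field $\Phi$, cancellation of the $\Oc(\epsilon^{-1})$ divergence against the length counterterm via the leading minimal-surface equation $v=\nabla_{\dg}\left(|\dg|^{-2}\dg\right)$, and a surviving finite term $-\int_{\g}ds\,|\dg|^{-2}h(\phi,n)$. The paper phrases the boundary contribution as the effect of moving the cut-off level set $\{x=\epsilon\}$ along the gradient flow of $x$ rather than as a conormal flux, but this is the same computation in different notation, and your concluding non-degeneracy argument matches the paper's.
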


\begin{proof}\mbox{}\\
	 Let $\gS_{\epsilon}(u) \coloneqq \gS(u) \cap \left\{ x \geq \epsilon \right\}$ and $\gamma_{\epsilon}(u) = \partial \gS_{\epsilon}(u) $ its boundary. Let $\Phi$ be a section of the normal bundle defined by \eqref{3- Lemma(first variation)1}.
 We denote $dA(u)$ the volume-form induced by $g$ on $\gS(u)$ and, finally, we denote by $dl_{\epsilon}$ the volume-form induced by $g$ on $\g_{\epsilon}$.

We first remark that
\begin{align}
\left.\frac{d }{d u} \left(\int_{\gS_{\epsilon}(u)} dA(u)\right)\right|_{u=0} &= \int_{\gS_{\epsilon}} \left.\frac{d}{d u} \left(dA(u)\right)\right|_{u=0} + \left.\frac{d }{d u}\left(\int_{\gS_{\epsilon}(u)} dA \right)\right|_{u=0},
\end{align}
and treat each of these integrals separately. The first term is
\begin{align}\label{3- Lemma(first variation)5}
\begin{array}{lll}
\int_{\gS_{\epsilon}} \left.\frac{d}{d u} \left(dA(u)\right)\right|_{u=0}&= - \int_{\gS_{\epsilon}} tr_{g} K(\Phi) dA.
\end{array}
\end{align}
The second integral can be rewritten as
\begin{align}\label{3- Lemma(first variation)6}
\begin{array}{lll}
\left.\frac{d }{d u}\left(\int_{\gS_{\epsilon}(u)} dA \right)\right|_{u=0} &=\left.\frac{d }{d u}\left( \int_{\epsilon = \gS_u^* x }^{\infty}\int_{\g_{\epsilon}} dA \right)\right|_{u=0}
&=\left.\frac{d }{d u}\left(\int_{\epsilon= \gS^* x + dx(\Phi) u  }^{\infty}\int_{\g_{\epsilon}} dA\right)\right|_{u=0}\\ \\
&=  \int_{\g_{\epsilon}} dl \;\frac{ dx(\Phi)}{|\gS^* dx|^2}\left| \nabla x \right|
&=  \int_{\g_{\epsilon}} dl \;\frac{ dx(\Phi)}{|\gS^* dx|}.
\end{array}
\end{align}
The second line follows from the fact that, in order to increase the function $\gS^*x$ by $dx(\Phi)$, one needs to follow the gradient flow $\nabla x = g^{-1} (\gS^* dx)$ for a time $\frac{dx(\Phi)}{|\gS^* dx|^2}$: \begin{equation}
\gS^*dx\left(\frac{dx(\Phi)}{|\gS^* dx|^2} \nabla x\right) = dx(\Phi).
\end{equation} Combining \eqref{3- Lemma(first variation)5} and \eqref{3- Lemma(first variation)6}, we have
\begin{align}\label{3- Lemma(first variation)7}
\left.\frac{d }{d u} \left(\int_{\gS_{\epsilon}(u)} dA(u)\right)\right|_{u=0} &= \int_{\g_{\epsilon}} dl\;  \frac{dx\left(\Phi \right)}{|\gS^* dx|} - \int_{\gS_{\epsilon}} tr_{g} K(\Phi) dA.
\end{align}

We now expand the above in powers of epsilon. One will need an asymptotic expansion of a generic element $\Phi$ of the normal bundle. This needs a bit of work and will be given a proof of its own (see Lemma~\ref{3- Lemma: Normal vector fields} and the following proof). We here only state the end result: Let $\Phi$ be a section of the normal bundle of $\gS$ in $Y$, then there exist a one parameter family $\phi(t)$ of sections of the normal bundle to $\g$ in $X$ such that
\begin{equation}
\Phi\left(s,t\right) = \phi(t) - h\left(\phi(t), t|\dg|v + t^2|\dg|^{-1}n\right) \partial_x -\frac{t^2}{2} \alpha \dg + \Oc(t^3).
\end{equation}
Where $\alpha$ is a function of $\phi(t)$ and $v$. In particular
\begin{equation}
dx\left(\Phi\right) = - h\left(\phi(t), t|\dg|v + t^2|\dg|^{-1}n\right)  +\Oc(t^3).
\end{equation}
Putting this with \eqref{3- Lemma(first variation)7} and the expansion of $\gS^*x$ given by \eqref{3- Asymptotic surface, expansion}, we get:
\begin{align}\label{3- Lemma(first variation)3}
\left.\frac{d }{d u} \Bigg(\int_{\gS(u)} dA(u)\Bigg)\right|_{u=0}
=-\frac{1}{\epsilon}\; \int_{\g} ds\; h\left(\phi , |\dg| v \right)- \int_{\g} ds |\dg|^{-2} h\left(\phi, n\right)  - \int_{\gS_{\epsilon}} tr_{g} K(\Phi) dA + \Oc(\epsilon).
\end{align}
Finally, noting that
\begin{equation}
\left.\frac{d}{d u}\left( \int_{\g_{\epsilon}(u)} dl(u)\right)\right|_{u=0} = -\frac{1}{\epsilon}\int_{\g} ds\;|\dg| \;  h\left(\phi , \nabla_{\dg} \left(|\dg|^{-2} \dg \right) \right),
\end{equation}
we can recast Equation \eqref{3- Lemma(first variation)3} as
\begin{align}\label{3- Lemma(first variation)4}
&\left.\frac{d }{d u} \left(\int_{\gS_{\epsilon}(u)} dA(u) - \int_{\g_{\epsilon}(u)} dl(u) \right) \right|_{u=0} \phantom{\hspace{8cm}}
\end{align}
\begin{align*}
=- \int_{\gS_{\epsilon}} tr_{g} K(\Phi) dA - \frac{1}{\epsilon} \; \int_{\g} ds\;|\dg|  \; h\bigg(\phi , \Big(v -\nabla_{\dg} \left(|\dg|^{-2} \dg\right)\Big)\bigg)- \int_{\g} ds |\dg|^{-2} h\left(\phi, n\right) + \Oc(\epsilon) .\nonumber 
\end{align*}
One can show that the divergences in the two first terms in this expansion compensate each others so that the first variation of the renormalised area is well defined. 

For variations fixing the conformal boundary i.e $\phi = 0$, we recover the variation of the area:
\begin{equation}
\frac{d \mathcal{A}\left( \gS_u \right) }{d u}|_{u=0} = \lim_{\epsilon=0}\;\left.\frac{d }{d u} \left(\int_{\gS_{\epsilon}(u)} dA(u) - \int_{\g_{\epsilon}(u)} dl(u) \right) \right|_{u=0} = - \int_{\gS} tr_{g} K(\Phi) dA.
\end{equation}
For $\phi \neq 0$ and if we suppose that $\gS$ is minimal the diverging terms vanish as a consequence of Lemma \ref{3- Lemma: first variations with compact support} Equation \eqref{3- Minimality, leading order} and the variation of the renormalised area is:
\begin{equation}
\left.\frac{d \mathcal{A}\left( \gS_u \right) }{d u}\right|_{u=0} = \lim_{\epsilon=0}\;\left.\frac{d }{d u} \left(\int_{\gS_{\epsilon}(u)} dA(u) - \int_{\g_{\epsilon}(u)} dl(u) \right) \right|_{u=0} = -\int_{\g} ds |\dg|^{-2} h\left(\phi, n\right).
\end{equation}
This concludes the proof of Lemma~\ref{3- Lemma: first variation with non-compact support}.
\end{proof}

\subsection{Conformal geodesics and their ambient surfaces}
We now rephrase the results from section \ref{subsec: Conformal geodesics} on curves $\g \from I \to X$ in a conformal manifold $\left(X, [h]\right)$ in terms of their ambient surface $\gS_{\g} \from I^2 \to Y$ in the associated Poincaré--Einstein manifold $\left(Y, g\right)$ as defined in the preceding subsection.

Let us first start with a remark. If $\g$ is a  curve in $X$ and $\gS_{\g}$ is its ambient surface given by Theorem~\ref{3- Thrm: Holographic dual} then $\gS_{\g}$ is ``asymptotically hyperbolic'' in the sense that its Gauss curvature asymptotically behaves as $G_{\gS}= -1+ \Oc(t^3)$. (This can be directly obtained from the expansion of the induced metric given by Corollary~\ref{3- Crllr: Induced metric}).

The following theorem then answers the natural question ``What does it take to make the induced metric on $\gS_{\g}$ to be as close as possible to the two-dimensional hyperbolic half-plane model?''.

\begin{Theorem}\label{3- Thrm: Conformal Parametrisation}\mbox{}\\
Let $\g$ be a curve in $X$ and let $\gS_{\g}$ be its ambient surface given by Theorem~\ref{3- Thrm: Holographic dual}. 
We suppose that $\gS_{\g}$ is given the parametrisation~\eqref{3- Asymptotic surface, expansion}. The induced metric then asymptotically approximates the Poincar\'e half-plane metric up to second order,
\begin{equation}\label{3- Induced metric3}
\gS^*_{\g}g = \frac{1}{t^2} \left[
\begin{pmatrix}
1 & 0\\ 0 & 1
\end{pmatrix}\left(1 + t^2\left(\frac{2}{3}\kappa(\g, v, h)\right)\right) + \Oc(t^3) \right].
\end{equation}
(Recall the definition \eqref{3- Conformal Parametrisation Def1} of $\kappa(\g, v, h)$.) What is more, the curve $\g$ satisfies the tangential part of the conformal geodesic equations~\eqref{1- Conformal Geodesic Equation} if and only if the induced metric is
\begin{equation}
\gS^*_{\g}g = \frac{1}{t^2} \left[
\begin{pmatrix}
1 & 0\\ 0 & 1
\end{pmatrix} + \Oc(t^3) \right].
\end{equation}
In other words a choice of preferred conformal parametrisation for 
$\g$ amounts to choosing the parametrisation of its ambient surface $\gS_{\g}$ which makes it looks as close as is possible to the hyperbolic half-plane model.
\end{Theorem}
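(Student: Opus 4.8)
The plan is to read off both claims from the induced-metric computation already packaged in Corollary~\ref{3- Crllr: Induced metric}, and then to translate the vanishing of the leading correction into the tangential conformal geodesic equation. First I would specialise Corollary~\ref{3- Crllr: Induced metric} to the ambient surface $\gS_\g$. That corollary is stated for \emph{any} surface with asymptotic boundary $\g$ written in the parametrisation~\eqref{3- Asymptotic surface, expansion}, and the ambient surface of Theorem~\ref{3- Thrm: Holographic dual} is exactly such a surface (with the extra feature that $v$ is pinned down by the first conformal geodesic equation~\eqref{1- Eq: First Conformal Geodesic Eq}). Substituting gives
\[
\gS^*_\g g = \frac{1}{t^2}\begin{pmatrix} 1 & 0 \\ 0 & 1 \end{pmatrix}\Bigl(1 + \tfrac{2}{3}\,\kappa(\g,v,h)\,t^2\Bigr) + \Oc(t),
\]
and since $\tfrac{1}{t^2}\Oc(t^3)=\Oc(t)$ this is precisely~\eqref{3- Induced metric3}. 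So the first claim needs no new work beyond quoting the corollary.

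For the second claim I would observe that~\eqref{3- Induced metric3} collapses to the pure model $\tfrac{1}{t^2}[\mathrm{Id}+\Oc(t^3)]$ exactly when the coefficient of $t^2$ disappears, i.e. when $\kappa(\g,v,h)=0$. The whole content of the theorem therefore reduces to showing that, for the ambient surface, the condition $\kappa(\g,v,h)=0$ is equivalent to $\g$ satisfying the tangential part $h(\dg,F(\g,h))=0$ of~\eqref{1- Conformal Geodesic Equation}.

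To establish this equivalence I would use that on $\gS_\g$ the auxiliary field is $v=\nabla_{\dg}(|\dg|^{-2}\dg)$, so the first conformal geodesic equation~\eqref{1- Eq: First Conformal Geodesic Eq} already holds by construction. With this $v$ inserted, the system~\eqref{1- Eq: First Conformal Geodesic Eq}--\eqref{1- Eq: Second Conformal Geodesic Eq} is equivalent (following Tod~\cite{tod_examples_2012}) to the single third-order equation~\eqref{1- Conformal Geodesic Equation}, while $\kappa(\g,v,h)$ is by definition~\eqref{3- Conformal Parametrisation Def1} the contraction of the right-hand side of~\eqref{1- Eq: Second Conformal Geodesic Eq} with $\dg$. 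Contracting $F(\g,h)$ with $h(\dg,\cdot)$ and substituting $v=\nabla_{\dg}(|\dg|^{-2}\dg)$, I would check that $h(\dg,F(\g,h))$ and $\kappa(\g,v,h)$ agree up to a nowhere-vanishing factor (a power of $|\dg|$). Then $\kappa=0 \iff h(\dg,F)=0$, which is exactly the preferred conformal parametrisation condition, and the theorem follows.

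The hard part will be this last, purely algebraic matching: carefully identifying the tangential projection of the third-order equation~\eqref{1- Conformal Geodesic Equation} with $\kappa(\g,v,h)$ once $v=\nabla_{\dg}(|\dg|^{-2}\dg)$ is substituted. One must track the reparametrisation-weight factors of $|\dg|$ relating the two formulations and confirm that no normal-direction terms contaminate the tangential contraction. This is where the equivalence between~\eqref{1- Conformal Geodesic Equation} and the $v$-system does the real work; everything else is bookkeeping layered on top of Corollary~\ref{3- Crllr: Induced metric}.
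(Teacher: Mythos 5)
Your proposal is correct and follows essentially the same route as the paper: quote Corollary~\ref{3- Crllr: Induced metric} for the induced-metric expansion, use Theorem~\ref{3- Thrm: Holographic dual} to pin down $v=\nabla_{\dg}(|\dg|^{-2}\dg)$, and identify $\kappa(\g,v,h)$ with the tangential part of~\eqref{1- Conformal Geodesic Equation} via the equivalence of the Tod system with the third-order equation. The only difference is presentational: the paper treats that last identification as already settled by the setup of Section~\ref{subsec: Conformal geodesics} and the definition~\eqref{3- Conformal Parametrisation Def1}, whereas you flag it as a computation still to be carried out.
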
	

\begin{proof}\mbox{}\\
This is a nearly direct consequence of the combination of Theorem~\ref{3- Thrm: Holographic dual} with Corollary~\ref{3- Crllr: Induced metric}: the expansion \eqref{3- Induced metric3} is given by Corollary~\ref{3- Crllr: Induced metric} while Theorem~\ref{3- Thrm: Holographic dual}  implies that for the ambient surface $\gS_{\g}$ associated to $\g$ the vector field $v$ in the parametrisation~\eqref{3- Holographic dual, expansion} satisfies the first geodesic equation~\eqref{1- Eq: First Conformal Geodesic Eq}. It follows that $\kappa(\g, v, h)$ as defined by Equation \eqref{3- Conformal Parametrisation Def1} is the tangential part of the conformal geodesic equation~\eqref{1- Conformal Geodesic Equation} whose vanishing defines a preferred conformal parametrisation.
\end{proof}

Theorem \ref{3- Thrm: Conformal Parametrisation} gives a geometric understanding of the vanishing of the \emph{tangential} part of the conformal geodesic equation \eqref{1- Conformal Geodesic Equation} for a curve $\g$ in terms of its ambient surface $\gS_{\g}$, the next theorem does the same for the normal part.

\begin{Theorem}\label{3- Thrm: Conformal Geodesics}\mbox{}\\
Let $\g$ be a curve in $X$ and let $\gS_{\g}$ be its ambient surface given by Theorem \ref{3- Thrm: Holographic dual}. 
The norm  $|K|$ of the extrinsic curvature of the ambient surface vanishes asymptotically as
\begin{equation}
|K| = 0 + \Oc\left(t^2\right).
\end{equation}
What is more, the curve $\g$ satisfies the normal part of the conformal geodesic equation \eqref{1- Conformal Geodesic Equation} if and only if the above norm vanishes one order higher than expected
\begin{equation}
|K| = 0 + \Oc\left(t^3\right).
\end{equation}
\end{Theorem}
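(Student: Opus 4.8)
\subsection*{Proof proposal}

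The plan is to use that $\gS_\g$ is minimal---being a critical point of renormalised area, it has $\mathrm{Tr}\,K=0$ by Lemma~\ref{3- Lemma: first variations with compact support}---in order to trade the norm of the full second fundamental form for a difference of two curvatures. For a minimal $2$-surface in $(Y,g)$ the Gauss equation, written in a $g$-orthonormal tangent frame $\{e_1,e_2\}$ with $K_{ij}=K(e_i,e_j)$, reads $G_\gS = K^Y(T\gS) + \langle K_{11},K_{22}\rangle - |K_{12}|^2$, where $K^Y(T\gS)$ is the ambient sectional curvature of the tangent plane and $G_\gS$ the intrinsic Gauss curvature. Minimality forces $K_{11}+K_{22}=0$, so $\langle K_{11},K_{22}\rangle-|K_{12}|^2=-\tfrac12|K|^2$ and hence
\begin{equation*}
|K|^2 = 2\big(K^Y(T\gS) - G_\gS\big).
\end{equation*}
Everything then reduces to expanding these two curvatures in $t$ along $\gS_\g$.

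First I would prove $|K|=\Oc(t^2)$ by showing both curvatures equal $-1$ up to $\Oc(t^4)$. For $G_\gS$ this follows from Corollary~\ref{3- Crllr: Induced metric}: the induced metric is conformal to the flat half-plane with factor $t^{-2}(1+\tfrac23\kappa t^2+\cdots)$, and a short computation shows the $\kappa$-term cancels in the Gauss curvature, leaving $G_\gS=-1+\Oc(t^3)$; the reflection symmetry $t\mapsto -t$ of the Fefferman--Graham expansion (valid to the relevant order) kills the odd term and gives $G_\gS=-1+\Oc(t^4)$. For $K^Y(T\gS)$ the decisive input is the structure of \eqref{3- FG Third Normal Form2}: writing $g=dr^2+e^{2r}h_x$ with $x=e^{-r}$ and $h_x=h-x^2P_h+\cdots$, one finds $\partial_r(e^{2r}h_x)=2e^{2r}h$ to the relevant order, so the Riccati equation gives radial sectional curvatures (those of planes containing $\partial_x$) equal to $-1+\Oc(x^4)$; moreover the Weyl tensor, being horizontal, never contributes to such a plane. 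Since \eqref{3- Holographic dual, expansion} shows $T_p\gS$ tilts away from $\mathrm{span}(\partial_x,\dg)$ only at order $t$ (through the bending term $\tfrac{t^2}{2}|\dg|^2 v$), the $\Oc(x^2)$ horizontal curvature deviations enter $K^Y(T\gS)$ only quadratically in the tilt, at order $t^4$. Thus $K^Y(T\gS)=-1+\Oc(t^4)$, the leading terms cancel, and $|K|^2=\Oc(t^4)$.

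For the characterisation I would push both expansions to the coefficient of $t^4$, since $|K|^2=2(k_4-g_4)t^4+\Oc(t^5)$, so that $|K|=\Oc(t^3)$ iff $k_4=g_4$. One extends the induced-metric expansion of Lemma~\ref{3- Lemma: Conformal parametrisation} one order further to read off $g_4$, and computes $k_4$ from the ambient curvature to order $x^2$---now genuinely involving $P_h$ through the $-x^2P_h$ term of \eqref{3- FG Third Normal Form2}---contracted with the tangent plane expanded to order $t^2$. Inserting the first conformal geodesic equation \eqref{1- Eq: First Conformal Geodesic Eq} $v=\nabla_{\dg}(|\dg|^{-2}\dg)$ together with $n=0$ (both valid for $\gS_\g$ by Theorem~\ref{3- Thrm: Holographic dual}), the difference should collapse to a positive multiple of $|N|^2$, where
\begin{equation*}
N=\Big(|\dg|^{-1}\nabla_{\dg}\big(|\dg|v\big)+\tfrac12|v|^2\dg-h^{-1}\big(P_h(\dg)\big)\Big)^{\!\perp}
\end{equation*}
is exactly the normal part of the second conformal geodesic equation \eqref{1- Eq: Second Conformal Geodesic Eq}. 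Hence $|K|=c\,|N|\,t^2+\Oc(t^3)$, which is $\Oc(t^3)$ precisely when $N=0$, i.e.\ when $\g$ is a conformal geodesic.

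The hard part will be the order-$t^4$ evaluation of $k_4$: it requires the full Poincar\'e--Einstein curvature to order $x^2$ (Schouten and Weyl), assessed along a surface that both bends into $X$ and samples the metric at the shifted point $\g+\tfrac{t^2}{2}|\dg|^2v$, followed by a tensorial simplification that---using only $v=\nabla_{\dg}(|\dg|^{-2}\dg)$---reorganises a Schouten/Weyl expression into the perfect square $|N|^2$. As an independent check, and a more transparent route to the appearance of $N$, I would also compute the normal projection $(\nabla^g_{U_s}U_s)^{\perp}$ directly from \eqref{3- Holographic dual, expansion}: the leading normal acceleration of the surface is $N$ itself, the $h^{-1}(P_h(\dg))$ piece arising from the $\Gamma\sim x^2P_h$ Christoffel symbols of \eqref{3- FG Third Normal Form2}. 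This pins down the constant $c$ and confirms both assertions at once, while the Gauss-equation route makes the first assertion $|K|=\Oc(t^2)$ essentially structural.
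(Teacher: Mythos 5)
Your Gauss-equation strategy is genuinely different from the paper's proof, and for the first assertion it works and is arguably more structural. The paper proceeds by direct computation: it expands the Christoffel symbols of \eqref{3- FG Third Normal Form2}, constructs an explicit asymptotic basis of the normal bundle (Lemmas~\ref{3- Lemma: Normal vector fields} and~\ref{3- Lemma: Normal vector fields bis}), and evaluates every component of $K(\Phi)$ order by order (Lemma~\ref{3- Lemma: Extrinsic curvature}), finding $K(\Phi)=t^{-3}\bigl[tK_1+\tfrac{t^2}{2}K_2+\Oc(t^3)\bigr]$ with $K_1$ killed by the first conformal geodesic equation and the diagonal of $K_2$ killed by $n=0$, so that the surviving leading term is the \emph{off-diagonal} entry $h(\phi,N)$. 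Your identity $|K|^2=2\bigl(K^Y(T\gS)-G_\gS\bigr)$ for a minimal surface is correct, your Riccati computation showing that radial sectional curvatures of \eqref{3- FG Third Normal Form2} are $-1+\Oc(x^4)$ (the Schouten contributions cancel between $\dot S$ and $S^2$) is correct, and together with the parity of the expansion this yields $|K|=\Oc(t^2)$ without ever touching the second fundamental form --- a nice alternative to the paper's Lemmas for this half of the statement.

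For the second assertion, however, the proposal stops exactly where the content of the theorem lies. The claim that $2(k_4-g_4)$ ``should collapse to a positive multiple of $|N|^2$'' is asserted, not derived; nothing in the structural part of your argument forces this coefficient to be a perfect square in $N$, nor fixes its sign (positivity is needed for the ``only if'' direction: if $k_4-g_4$ could vanish for $N\neq0$ the equivalence fails). That fourth-order evaluation is precisely the computation the paper carries out, merely repackaged as the $t^4$ coefficient of $|K|^2$ instead of the $t^2$ coefficient of $K$. Moreover, your proposed ``independent check'' --- computing $(\nabla_{U_s}U_s)^{\perp}$ and expecting its leading term to be $N$ --- rests on a misidentification: by Lemma~\ref{3- Lemma: Extrinsic curvature} and Corollary~\ref{3- Crllr: extrinsic curvature}, the $ss$-component of $K$ at the relevant order is $-h(\phi,n)|\dg|^{-2}$, which vanishes identically for the ambient surface since $n=0$; the conformal geodesic operator appears only in the mixed $st$-component, i.e.\ in $(\nabla_{\partial_s}\partial_t)^{\perp}$. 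So the check as described would return $0+\Oc(t^3)$ whether or not $\g$ is a conformal geodesic and would not pin down the constant $c$. To close the argument you must either complete the $t^4$ expansion of both curvatures, or compute the mixed component of $K$ as the paper does.
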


The rest of this section is dedicated to proving the above Theorem. Even though straightforward, the proof is a bit lengthy and we will split it in three lemmas and two corollaries. The reader also ought to know that we will present in section \ref{subsubsec: Extrinsic curvature in the ambient metric} a much shorter proof, however relying on the use of tractors and the ambient construction.

\begin{Lemma}\label{3- Lemma: Normal vector fields}
Let $\gS$ be any surface in $Y$ with asymptotic boundary $\g$. Let $\Phi \in \Gamma\left[N_{\gS}\right]$ be a section of its normal bundle in $Y$. Then there exists a one parameter family $\phi(t)$ of sections of the normal bundle to $\g$ in $X$ such that
\begin{align}\label{3- Normal vector fields}
\Phi\left(s,t\right) &= \phi(t) - h\left(\phi(t), t|\dg|v + t^2|\dg|^{-1}n\right) \partial_x \\&\quad -\frac{t^2}{2}  \Big( h\left(\phi(t) , \partial_s v\right) + \left(v^i\partial_i h -2P\right)(\phi(t), \dg) \Big)\dg + \Oc(t^3)\nonumber
\end{align}
\end{Lemma}

\begin{figure}[h]
	\begin{center}
		\includegraphics[scale=0.5]{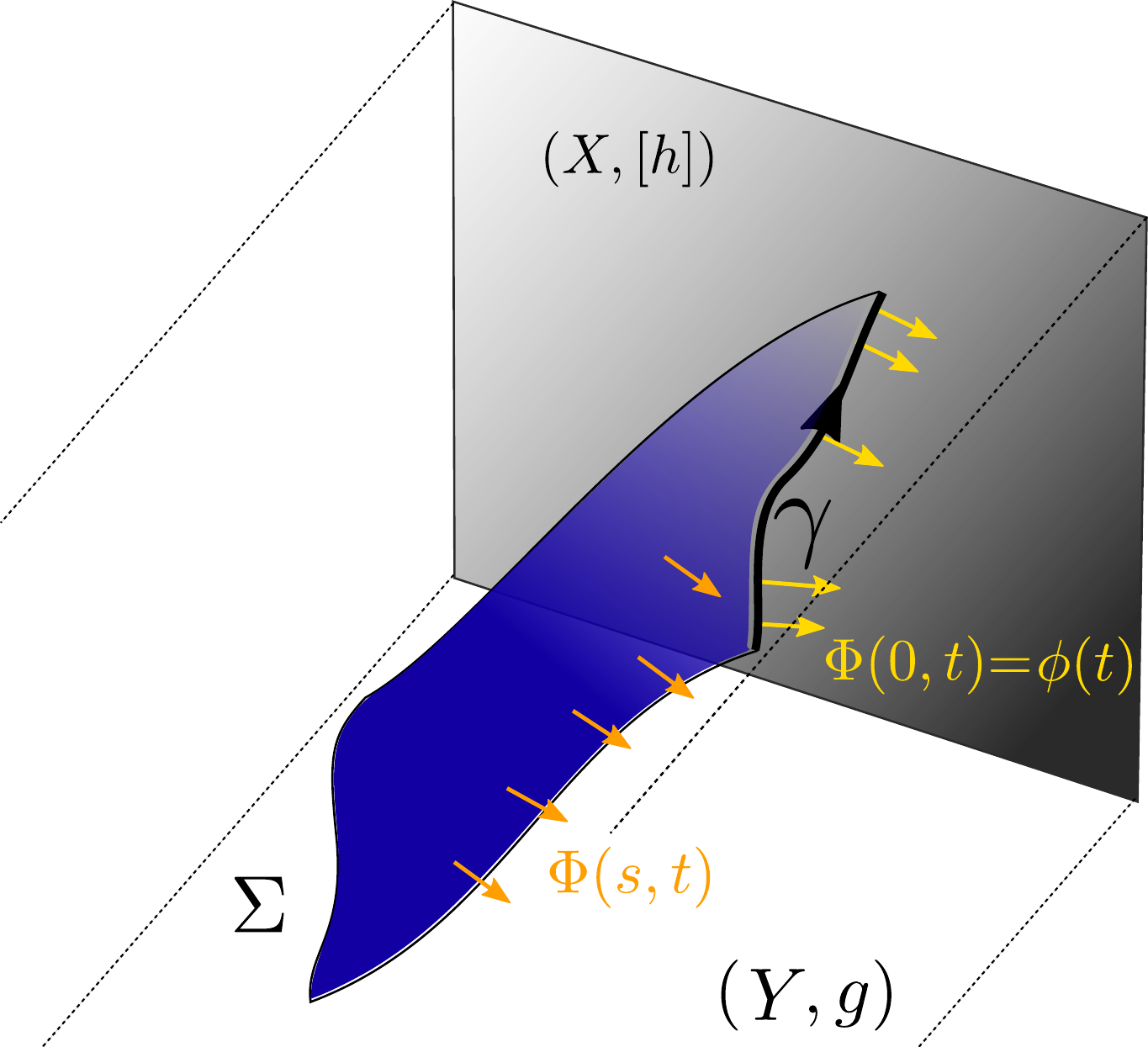}
	\end{center}
	\caption{A normal vector field $\Phi \in \Gamma\left[N_{\gS}\right]$ to a surface $\gS \subset Y$ with asymptotic boundary $\g \subset X$}.
	\label{fig:AmbientSurfaceWithNormals}
\end{figure}

\begin{Lemma}\label{3- Lemma: Normal vector fields bis}
	Let $\gS$ be any surface in $Y$ with asymptotic boundary $\g$ and let $\left\{ \eta_a\right\}_{a \in 1...n-1}$ be a basis of $N_{\g}$ the normal bundle to $\g$ in $X$.	Then the set $\left\{ N_a\right\}_{a \in 1...n-1}$ of vector fields asymptotically obtained by taking $\phi(t)=\eta_a$ in the previous lemma gives, for any point $p\in\gS$, a basis of $N_p\gS$, the normal bundle to $\gS$ at $p$.

What is more, their inner product is
\begin{equation}\label{3- Normal vector field, inner product}
g\left(N_a, N_b\right) = \frac{1}{t^2}\bigg(h\left(\eta_a , \eta_ b\right) |\dg|^{-2} + 0 + \Oc(t^2)\bigg).
\end{equation}
\end{Lemma}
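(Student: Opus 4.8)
The plan is to establish the two assertions in turn, taking as input the explicit formula for $\Phi$ from the previous lemma specialised to $\phi(t)=\eta_a$, together with the normal form \eqref{3- FG Third Normal Form2} of the Poincaré--Einstein metric and the surface expansion \eqref{3- Asymptotic surface, expansion}.

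For the basis claim, I would first note that by Lemma~\ref{3- Lemma: Normal vector fields} each $N_a$ is automatically a section of the normal bundle $N_{\gS}$, so the only thing to verify is pointwise linear independence; since $Y$ has dimension $n+1$ and $\gS$ is two-dimensional, $N_{\gS}$ has rank $(n+1)-2=n-1$, exactly the number of the $N_a$. From \eqref{3- Normal vector fields} with $\phi(t)=\eta_a$ one reads off $N_a=\eta_a+\Oc(t)$, where the correction terms lie along $\partial_x$ and $\dg$. A vanishing combination $\sum_a c^a N_a=0$ therefore forces, after projecting onto the $y$-directions transverse to both $\partial_x$ and $\dg$, the relation $\sum_a c^a\eta_a=0$; as $\{\eta_a\}$ is a basis of $N_{\g}$ this gives $c^a=0$. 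Hence $\{N_a\}$ spans $N_p\gS$ at every point with $t>0$.

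For the inner product, I would substitute $N_a=\eta_a-h\!\left(\eta_a,\,t|\dg|v+t^2|\dg|^{-1}n\right)\partial_x-\tfrac{t^2}{2}(\cdots)\dg+\Oc(t^3)$ into $g=\tfrac{1}{x^2}\!\left(dx^2+h-x^2P+\Oc(x^4)\right)$, using the expansion $x=t|\dg|+\Oc(t^3)$, which yields $x^{-2}=t^{-2}|\dg|^{-2}\!\left(1+\Oc(t^2)\right)$. Two structural features drive the computation: the block form of the metric gives $g(\partial_x,\partial_{y^i})=0$, so the order-$t$ part of $N_b$ (which points along $\partial_x$) is orthogonal to the leading term $\eta_a$ of $N_a$; and $h(\eta_a,\dg)=0$ since $\eta_a\in N_{\g}$. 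The leading contribution is thus $g(\eta_a,\eta_b)=x^{-2}\!\left(h(\eta_a,\eta_b)+\Oc(x^2)\right)=t^{-2}\!\left(|\dg|^{-2}h(\eta_a,\eta_b)+\Oc(t^2)\right)$. The $\partial_x$--$\partial_x$ cross terms enter only at relative order $t^2$ (a product of two order-$t$ coefficients times $x^{-2}\sim t^{-2}$), and the $\dg$-terms are suppressed further by $h(\eta_a,\dg)=0$; crucially, no relative order-$t$ correction arises, which accounts for the explicit $+0$ in \eqref{3- Normal vector field, inner product}.

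The delicate point, and the one step I expect to require genuine care, is confirming that the relative order-$t$ term truly vanishes. This rests on combining the orthogonality $g(\partial_x,\partial_{y^i})=0$ with the absence of a $t^2$-coefficient in the expansion of $x$ and of any odd power of $x$ in $h(x)$: one must check that the interaction of the leading $\eta_a$ in $N_a$ with the $\partial_x$-correction of $N_b$ does not sneak in an order-$t$ contribution. Everything else is bookkeeping of orders using the explicit metric, and the stated formula \eqref{3- Normal vector field, inner product} follows.
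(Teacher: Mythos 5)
Your proof is correct and follows essentially the same route as the paper: substitute the expansion of $N_a$ obtained from Lemma~\ref{3- Lemma: Normal vector fields} with $\phi(t)=\eta_a$ into the expansion \eqref{3- Lemma(Normal bundle)2} of $g$ restricted to $\gS$ and track orders, using $g(\partial_x,\partial_{y^i})=0$, $h(\eta_a,\dg)=0$ and the absence of a $t^2$-term in $x(s,t)$ to see that no relative order-$t$ contribution survives. The only cosmetic difference is that the paper concludes linear independence from the non-degeneracy of the Gram matrix \eqref{3- Normal vector field, inner product} together with the rank count, whereas you argue it directly by projecting onto the directions transverse to $\partial_x$ and $\dg$; the two arguments are equivalent.
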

\begin{proof}\mbox{}\\
Let $\Phi \in \gG\left[N_{\gS}\right]$ be any section of the normal tangent bundle:
\begin{equation}
\Phi = \Phi_0 + n_0 \partial_x + t\bigg( \Phi_1 + n_1 \partial_x  \bigg) + \frac{t^2}{2} \bigg( \Phi_2 + n_2 \partial_x  \bigg) + \Oc(t^3).
\end{equation}
We now want to impose that the above is a vector normal to $\gS$,
\begin{align}\label{3 - ProofLemma(normal bundle)1}
g\left(\Phi , \partial_s \right)=0 & & g\left(\Phi , \partial_t \right)=0.
\end{align}
We do so by solving the above equation order by order. With the expansion 
\begin{equation}\label{3- Lemma(Normal bundle)2}
\bar{g}\left(x(s,t), y^i(s,t)\right) = dx^2 + h + t^2\left.\left(\frac{1}{2}v^i \partial_i h - |\dg|^2 P\right) \right|_{t=0} + \Oc(t^3).
\end{equation}
of the restriction $g\big|_{\gS}$ of the metric to $\gS$ this is a direct computation and results in the expansion \eqref{3- Normal vector fields}.

If we now take $\phi(t) = \eta_a$ in \eqref{3- Normal vector fields}, all that is left to prove Lemma~\ref{3- Lemma: Normal vector fields bis} is prove that such vector fields form a basis of the normal bundle at every point $p \in \gS$. Using the expansion of the metric \eqref{3- Lemma(Normal bundle)2}, this is however straightforward to check that their inner product is \eqref{3- Normal vector field, inner product}. In particular, since this inner-product is non-degenerate, it proves their independence.
\end{proof}
	
\begin{Lemma}\label{3- Lemma: Extrinsic curvature}
Let $\g$ be a curve in $X$ and let $\gS$ be any surface with asymptotic boundary $\g$ in the parametrisation \eqref{3- Asymptotic surface, expansion} of Lemma~\ref{3- Lemma: Conformal parametrisation}. Let $\Phi \in N_{\gS}$ be an element of the normal bundle parametrised as in Equation \eqref{3- Normal vector fields} of Lemma~\ref{3- Lemma: Normal vector fields bis}. 

Then the extrinsic curvature has, in the coordinate basis $\{\partial_s, \partial_t\}$ an expansion of the form
\begin{equation}\label{3- Extrinsic curvature, general expansion}
 K\left(\Phi\right) = \frac{1}{t^3} \bigg[  0 + t\; K_1 +\frac{t^2}{2}\; K_2 + \Oc(t^3)\bigg]
\end{equation}
with
\begin{adjustwidth}{30pt}{0pt}
\begin{align*}
K_1 &=  
\begin{pmatrix}
 h\left(\phi(t), \nabla_{\dg}\left(|\dg|^{-2} \dg \right) -v\right)& 0 \\ 0 & 0
 \end{pmatrix}
\\ \\
K_2 &= 
\begin{pmatrix}
- h\left(\phi(t), n\right) |\dg|^{-2} & h\left(\phi(t),  	|\dg|^{-1}\nabla_{\dg}\left(|\dg|v\right) + \frac{1}{2}|v|^2 \dg - h^{-1}\left( P(\dg) \right)\right) 
\\ \\h\left(\phi(t),  	|\dg|^{-1}\nabla_{\dg}\left(|\dg|v\right) + \frac{1}{2}|v|^2 \dg - h^{-1}\left( P(\dg) \right)\right)
 &  h\left(\phi(t), n\right) |\dg|^{-2}
\end{pmatrix}\\
\end{align*}
\end{adjustwidth}
\end{Lemma}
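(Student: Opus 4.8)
The plan is to compute the vector-valued second fundamental form of $\gS$ contracted against the normal field $\Phi$ from its definition $K(\Phi)_{ab} = g\left(D_{\partial_a}\partial_b, \Phi\right)$ for $a,b\in\{s,t\}$, where $D$ is the Levi-Civita connection of the ambient metric $g$ on $Y$ (not to be confused with the connection $\nabla$ of $h$ on $X$). All the ingredients are already in closed form: the embedding, and hence the coordinate tangent vectors $\partial_s\gS,\partial_t\gS$ together with their second derivatives $\partial_a\partial_b\gS$, from the expansion \eqref{3- Asymptotic surface, expansion}; the normal field $\Phi$ from \eqref{3- Normal vector fields}; and the ambient metric from \eqref{3- FG Third Normal Form2}. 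First I would write $D_{\partial_a}\partial_b = \partial_a\partial_b\gS + \Gamma\left(\partial_a\gS,\partial_b\gS\right)$ in the ambient coordinates $(x,y^i)$ and expand the Christoffel symbols $\Gamma$ of $g$ to the order in $x$ dictated by \eqref{3- FG Third Normal Form2}, i.e.\ using $h(x) = h - x^2 P + \Oc(x^4)$.

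The cleanest way to organise the expansion, and in particular to make the advertised vanishing of the leading $t^{-3}$ term transparent, is to factor out the conformal weight by writing $g = x^{-2}\bar g$ with $\bar g = dx^2 + h(x)$ and $\go = -\ln x$. Under this rescaling the connection splits as $D = \bar D + \left(d\go\otimes\mathrm{Id} + \mathrm{Id}\otimes d\go - \bar g\otimes\mathrm{grad}_{\bar g}\go\right)$, and since the tangential terms drop out of the normal projection one gets $K(\Phi)_{ab} = g\left(\overline{II}(\partial_a,\partial_b),\Phi\right) - \bar g(\partial_a,\partial_b)\, g\big((\mathrm{grad}_{\bar g}\go)^{\perp},\Phi\big)$, where $\overline{II}$ is the second fundamental form of $\gS$ for the regular metric $\bar g$. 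Because $\mathrm{grad}_{\bar g}\go = -x^{-1}\partial_x$ and $x = t|\dg| + \Oc(t^3)$, the conformal term carries the entire $t^{-3}$ singularity; contracting with $\Phi$, whose $\partial_x$-component is itself $\Oc(t)$ by \eqref{3- Normal vector fields}, and using $\bar g(\partial_a,\partial_b) = |\dg|^2\delta_{ab} + \Oc(t^2)$ from \eqref{3- Lemma(Normal bundle)2} together with Corollary~\ref{3- Crllr: Induced metric}, I would verify that the $t^0$ coefficient inside the bracket cancels, giving $K(\Phi) = t^{-3}\big[0 + \Oc(t)\big]$.

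Having disposed of the leading term, the remaining work is to collect the $t^1$ and $t^2$ coefficients. At order $t^1$ only the $ss$-entry survives, and there the transverse datum $\partial_t^2 y^i = |\dg|^2 v^i + \Oc(t)$ competes with the tangential acceleration $\nabla_{\dg}(|\dg|^{-2}\dg)$ carried by $\partial_s^2 y^i$; their difference, paired with $\phi$ through $h$, produces exactly the entry $h\big(\phi, \nabla_{\dg}(|\dg|^{-2}\dg) - v\big)$ of $K_1$. At order $t^2$ I expect the two diagonal entries to come from the $\partial_x$-component of $\Phi$, which by \eqref{3- Normal vector fields} contributes $-h(\phi, t^2|\dg|^{-1}n)$ and hence the $\mp h(\phi,n)|\dg|^{-2}$ terms, while the symmetric off-diagonal $st$-entry assembles the combination $|\dg|^{-1}\nabla_{\dg}(|\dg|v) + \tfrac12|v|^2\dg - h^{-1}(P(\dg))$ of $K_2$; here the first curvature correction $-x^2 P$ of \eqref{3- FG Third Normal Form2} and the $\partial_s v$ and $(v^i\partial_i h - 2P)$ terms in the expansion of $\Phi$ all contribute, reproducing the vector-valued analogue of $\kappa(\g,v,h)$ from \eqref{3- Conformal Parametrisation Def1}.

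The main obstacle is purely the bookkeeping: one must expand a covariant derivative in a curved, conformally singular ambient metric three orders beyond the leading behaviour, keeping track of the interference between the $x^{-2}$ conformal factor (source of the $t^{-3}$ prefactor and of the delicate leading cancellation) and the first curvature term $-x^2 P$ in $h(x)$, while simultaneously inserting the nontrivial $t^2$-part of $\Phi$ involving $\partial_s v$ and $(v^i\partial_i h - 2P)$. The conformal reorganisation above is what I would rely on to keep the $t^{-3}$ cancellation manifest and to ensure the conformal-geodesic expressions surface in the invariant form quoted for $K_1$ and $K_2$ rather than as an unrecognisable tangle of Christoffel terms.
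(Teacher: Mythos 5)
Your proposal is correct and follows essentially the same route as the paper: a direct order-by-order computation of $K(\partial_a,\partial_b,\Phi)=g(\nabla_{\partial_a}\partial_b,\Phi)$ from the metric expansion \eqref{3- FG Third Normal Form2}, the surface parametrisation \eqref{3- Asymptotic surface, expansion} and the normal field \eqref{3- Normal vector fields}, the only difference being that the paper expands the Christoffel symbols of the singular metric $g$ directly, whereas you first split off the conformal factor $x^{-2}$ and use the conformal transformation of the second fundamental form. That reorganisation is a legitimate piece of bookkeeping for the same calculation (and makes the vanishing of the $t^{-3}$ coefficient manifest), so no genuine gap.
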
	
Note that, if we assume $\phi(0) \neq 0$, the normal $\Phi$ and coordinate vectors $\{\partial_s, \partial_t\}$ all have norms diverging as one over $t$, see \eqref{3- Induced metric} and \eqref{3- Normal vector field, inner product}.

\begin{proof}\mbox{}\\
This is again just an order by order computation of the different components of the extrinsic curvature,
\begin{align}
K\left(\partial_s , \partial_s , \Phi \right) &= g\left( \nabla_s \partial_s , \Phi\right), \nonumber\\
K\left(\partial_t , \partial_t , \Phi \right) &= g\left( \nabla_t \partial_t , \Phi\right), \\
K\left(\partial_s , \partial_t , \Phi \right) &= g\left( \nabla_s \partial_t , \Phi\right). \nonumber
\end{align}
From the expansion \eqref{3- FG Third Normal Form2} of $g$ one can derive the expansion of the Christoffel symbols
\begin{align*}
\gG_{xxx}&= \frac{1}{t^3 |\dg|^2}\left[ - \frac{1}{|\dg|} + t^2 \frac{x_3}{|\dg|^2} + \Oc(t^3)\right], & \gG_{ixj} &= \frac{1}{t^3 |\dg|^2}\left[ \frac{h_{ij}}{ |\dg|} + t^2\left( -\frac{x_3}{|\dg|^2}h + \frac{|\dg|}{2} v^i \partial_i h \right) + \Oc(t^3)\right], \nonumber\\
\gG_{xix}&= 0, & \gG_{xij} &= \frac{1}{t^3 |\dg|^2}\left[ -\frac{h_{ij}}{ |\dg|} + t^2\left( -\frac{x_3}{|\dg|^2}h - \frac{|\dg|}{2} v^i \partial_i h \right) + \Oc(t^3)\right], \\
\gG_{xxi}&= 0, & \gG_{ikj} &= \frac{1}{t^3 |\dg|^2}\left[ - t \gG_h{}_{ikj} + \Oc(t^3)\right], \nonumber
\end{align*}
with $x_3 = \bigg( -\frac{3}{4}|\dg|^3|v|^2 + \frac{1}{2} |\dg| \kappa(\g, v, h) \bigg)$. Then, since we already have the expansions \eqref{3- Lemma(Normal bundle)2} and \eqref{3- Normal vector fields} for $g\big|_{\gS}$ and the normal vectors fields, it is tedious but straightforward to derive the expansion \eqref{3- Extrinsic curvature, general expansion}.
\end{proof}

Combining Lemma~\ref{3- Lemma: Extrinsic curvature} with previous results we obtain the following corollaries:
\begin{Corollary}\label{3- Crllr: mean curvature}
Let $\g$ be a curve in $X$ and let $\gS$ be any surface with asymptotic boundary $\g$ in the parametrisation \eqref{3- Asymptotic surface, expansion} of Lemma~\ref{3- Lemma: Conformal parametrisation}. Let $\Phi \in N_{\gS}$ be an element of the normal bundle parametrised as in Equation~\eqref{3- Normal vector fields} of Lemma~\ref{3- Lemma: Normal vector fields}.
 Then the mean curvature of $\gS$ is
\begin{equation}
TrK(\Phi) = \frac{1}{t}\bigg( 0 + t \; h\left(\phi(t), \nabla_{\dg}\left(|\dg|^{-2} \dg \right) -v\right) + \Oc(t^3) \bigg).
\end{equation}
In particular, $\gS$ is asymptotically minimal if and only if $v$ satisfies the first conformal geodesic Equation \eqref{1- Eq: First Conformal Geodesic Eq}.
\end{Corollary}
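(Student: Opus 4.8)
The plan is to obtain the mean curvature directly by tracing the second fundamental form from Lemma~\ref{3- Lemma: Extrinsic curvature} against the inverse of the induced metric from Corollary~\ref{3- Crllr: Induced metric}. Since both expansions are already in hand, the whole statement reduces to bookkeeping with $\mathrm{Tr}\,K(\Phi) = (\gS^*g)^{ab} K_{ab}(\Phi)$, where $a,b$ run over the surface coordinates $\{s,t\}$.

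First I would invert the induced metric. Corollary~\ref{3- Crllr: Induced metric} gives $(\gS^*g)_{ab}=t^{-2}\delta_{ab}\big(1+\tfrac{2}{3}t^2\kappa(\g,v,h)\big)+\Oc(t^{-1})$, whose inverse is $(\gS^*g)^{ab}=t^2\delta^{ab}+\Oc(t^4)$; the conformal factor is invisible at the orders that matter. In parallel, Lemma~\ref{3- Lemma: Extrinsic curvature} reads $K_{ab}(\Phi)=t^{-2}(K_1)_{ab}+\tfrac{1}{2}t^{-1}(K_2)_{ab}+\Oc(1)$, with $K_1,K_2$ the explicit matrices recorded there.

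The heart of the computation is then to read off the two $\delta$-traces. Tracing $K_1$ picks out only its $(1,1)$ entry and yields $h\big(\phi(t),\nabla_{\dg}(|\dg|^{-2}\dg)-v\big)$, whereas tracing $K_2$ gives $-h(\phi,n)|\dg|^{-2}+h(\phi,n)|\dg|^{-2}=0$ because its diagonal entries are equal and opposite. Assembling, $(\gS^*g)^{ab}K_{ab}=\delta^{ab}(K_1)_{ab}+\tfrac{t}{2}\delta^{ab}(K_2)_{ab}+\Oc(t^2)=h\big(\phi(t),\nabla_{\dg}(|\dg|^{-2}\dg)-v\big)+\Oc(t^2)$, which is exactly the claimed expansion once the overall $t^{-1}$ is factored back out. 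The $\Oc(t^4)$ correction to the inverse metric and the $\Oc(1)$ remainder in $K$ only feed the $\Oc(t^2)$ error, as immediate power counting confirms. The one genuinely crucial point, and the part most worth stating explicitly, is the exact cancellation $\mathrm{Tr}\,K_2=0$: it is what kills the order-$t$ coefficient of the mean curvature and so produces the $\Oc(t^3)$ (rather than $\Oc(t^2)$) remainder inside the bracket.

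For the ``in particular'' clause I would note that asymptotic minimality, $\mathrm{Tr}\,K=\Oc(t^2)$, forces the leading coefficient $h\big(\phi,\nabla_{\dg}(|\dg|^{-2}\dg)-v\big)$ to vanish for every section $\phi$ of the normal bundle $N_\g$. Since the parametrisation of Lemma~\ref{3- Lemma: Conformal parametrisation} already imposes $h(\dg,v)=h(\dg,\nabla_{\dg}(|\dg|^{-2}\dg))$, the vector $\nabla_{\dg}(|\dg|^{-2}\dg)-v$ has no tangential component; hence its pairing with all normal $\phi$ vanishing is equivalent to $v=\nabla_{\dg}(|\dg|^{-2}\dg)$, which is precisely the first conformal geodesic equation~\eqref{1- Eq: First Conformal Geodesic Eq}.
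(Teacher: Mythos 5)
Your proposal is correct and is essentially identical to the paper's own argument: the paper's proof of this corollary is the single sentence that it follows directly from Lemma~\ref{3- Lemma: Extrinsic curvature} and Corollary~\ref{3- Crllr: Induced metric}, and your computation — inverting the induced metric, tracing $K_1$ and $K_2$, and isolating the key cancellation $\mathrm{Tr}\,K_2=0$ together with the normality of $\nabla_{\dg}(|\dg|^{-2}\dg)-v$ for the ``in particular'' clause — is exactly that deduction written out. The only slip is transcribing the induced-metric remainder as $\Oc(t^{-1})$ where Corollary~\ref{3- Crllr: Induced metric} actually gives a vanishing $t^{-1}$ coefficient and an $\Oc(t)$ error, but your subsequent power counting (inverse metric $=t^2\delta^{ab}+\Oc(t^4)$) uses the correct version, so the argument stands.
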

\begin{proof}
This is a direct consequence of Lemma~\ref{3- Lemma: Extrinsic curvature} and Corollary~\ref{3- Crllr: Induced metric}. 
\end{proof}

\begin{Corollary}\label{3- Crllr: extrinsic curvature}
	Let $\g$ be a curve in $X$ and let $\gS_{\g}$ be its ambient surface given by Theorem~\ref{3- Thrm: Holographic dual}. Let $\Phi \in N_{\gS}$ be an element of the normal bundle parametrised as in Equation \eqref{3- Normal vector fields} of Lemma~\ref{3- Lemma: Normal vector fields}.	
Then the extrinsic curvature has, in the coordinate basis $\{\partial_s, \partial_t\}$, an expansion of the form
\begin{align}\label{3- Extrinsic curvature of the holographic dual}
&\; K\left(\Phi\right) = \frac{1}{t^3} \bigg[  0
+ t^2 
\begin{pmatrix}
0& 1 
\\ 1
&  0
\end{pmatrix} h\left(\phi(t) \;,\;  	|\dg|^{-1}\nabla_{\dg}\left(|\dg|v\right) + \frac{1}{2}|v|^2 \dg - h^{-1}\left( P_h(\dg) \right)\right)
+ \Oc(t^3) \bigg]
\end{align}
In particular, $\gS_{\g}$ is asymptotically totally geodesic if and only if $v$ satisfies the second conformal geodesic equation \eqref{1- Eq: Second Conformal Geodesic Eq} or equivalently if and only if $\g$ satisfies the conformal geodesic equation \eqref{1- Conformal Geodesic Equation}.
\end{Corollary}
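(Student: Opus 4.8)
The plan is to obtain the expansion \eqref{3- Extrinsic curvature of the holographic dual} by specialising the general formula of Lemma~\ref{3- Lemma: Extrinsic curvature} to the particular surface $\gS_\g$, using the two algebraic conditions that single out the ambient surface among all surfaces with asymptotic boundary $\g$. Lemma~\ref{3- Lemma: Extrinsic curvature} already gives, for an arbitrary such surface in the parametrisation \eqref{3- Asymptotic surface, expansion}, the full second fundamental form through order $t^2$ in terms of the two pieces of free data $v$ and $n$. So the genuine computation is behind us and only substitution remains.

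First I would recall from Theorem~\ref{3- Thrm: Holographic dual} that the two defining properties of $\gS_\g$ translate into equations for these free data: minimality (criticality under compactly supported variations) forces $v = \nabla_{\dg}(|\dg|^{-2}\dg)$, i.e.\ the first conformal geodesic equation \eqref{1- Eq: First Conformal Geodesic Eq}, while the Neumann condition from Lemma~\ref{3- Lemma: first variation with non-compact support} (criticality under variations moving the boundary) forces $n=0$. Substituting these into the matrices $K_1$ and $K_2$ of Lemma~\ref{3- Lemma: Extrinsic curvature}: with $v = \nabla_{\dg}(|\dg|^{-2}\dg)$ the entry $h(\phi,\nabla_{\dg}(|\dg|^{-2}\dg)-v)$ vanishes identically, so $K_1=0$ and the leading contribution is pushed from order $t$ to order $t^2$; with $n=0$ the two diagonal entries $\mp h(\phi,n)|\dg|^{-2}$ of $K_2$ vanish, leaving only the off-diagonal entry $h(\phi,W)$, where $W := |\dg|^{-1}\nabla_{\dg}(|\dg|v) + \tfrac12|v|^2\dg - h^{-1}(P_h(\dg))$ is exactly the right-hand side of the second conformal geodesic equation \eqref{1- Eq: Second Conformal Geodesic Eq}. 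This produces the stated expansion \eqref{3- Extrinsic curvature of the holographic dual} (up to the evident normalisation). The leading-order claim $|K| = \Oc(t^2)$ then follows on accounting for the norms of $\partial_s$, $\partial_t$ and $\Phi$, all of which diverge like $t^{-1}$ by Corollary~\ref{3- Crllr: Induced metric} and Lemma~\ref{3- Lemma: Normal vector fields}: the $t^{-3}$ prefactor multiplying an $\Oc(t^2)$ matrix gives, after raising the two surface indices and normalising $\Phi$, a quantity of size $t^2$.

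The conceptual point — which I expect to be the only place requiring care rather than calculation — is reading off the final equivalence. Since $\phi(t)$ is a section of the normal bundle $N_\g$, the pairing $h(\phi,W)$ detects only the component of $W$ orthogonal to $\dg$. Hence the $t^2$ coefficient vanishes for \emph{every} normal $\Phi$ precisely when the normal projection of $W$ vanishes, which is exactly the normal part of the second conformal geodesic equation. As $\gS_\g$ already satisfies the first equation \eqref{1- Eq: First Conformal Geodesic Eq}, the vanishing of the normal projection of $W$ is equivalent to $\g$ satisfying the normal part of \eqref{1- Conformal Geodesic Equation}, i.e.\ to $\g$ being a conformal geodesic. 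I would stress here that the tangential component $\kappa = h(\dg,W)$ — which instead governs the preferred conformal parametrisation through the induced metric — is invisible to $|K|$ precisely because we contract against normal vectors; this is what cleanly separates the parametrisation question from the conformal geodesic question, and completes the proof.
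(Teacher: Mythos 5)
Your proposal is correct and follows exactly the paper's route: the paper's own proof is the one-line observation that the corollary is a direct consequence of Lemma~\ref{3- Lemma: Extrinsic curvature} combined with Theorem~\ref{3- Thrm: Holographic dual}, i.e.\ substituting $v=\nabla_{\dg}(|\dg|^{-2}\dg)$ and $n=0$ into $K_1$ and $K_2$, precisely as you do. Your closing remarks on the normal projection and the norm estimate anticipate the paper's subsequent proof of Theorem~\ref{3- Thrm: Conformal Geodesics} and are likewise consistent with it.
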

\begin{proof}
This is a direct consequence of Lemma~\ref{3- Lemma: Extrinsic curvature} with Theorem~\ref{3- Thrm: Holographic dual}. 
\end{proof}

We now prove Theorem~\ref{3- Thrm: Conformal Geodesics}. 
\begin{proof}\mbox{}\\
Making use of Corollary~\ref{3- Crllr: extrinsic curvature} and the explicit metrics \eqref{3- Normal vector field, inner product} and \eqref{3- Induced metric} of Lemma~\ref{3- Lemma: Normal vector fields bis} and Corollary~\ref{3- Crllr: Induced metric} we can directly compute the norm of the extrinsic curvature \eqref{3- Extrinsic curvature of the holographic dual},
\begin{equation}
|K|^2 = 2\;t^4\; \bigg| |\dg|^2\; \left( |\dg|^{-1}\nabla_{\dg}\left(|\dg|v\right) + \frac{1}{2}|v|^2 \dg - h^{-1}\left( P_h(\dg) \right) \right)^{\perp} \bigg|^2 + \Oc(t^5)
\end{equation}	
where $\perp$ signals orthogonal projection in the direction normal to $\gS_{\g}$.

Thus generically $|K| = 0 + \Oc(t^2)$ and $|K| = 0 + \Oc(t^3)$ if and only if $v$ satisfies the normal part of the second conformal geodesic equation \eqref{1- Eq: Second Conformal Geodesic Eq}. Finally we recall that the first conformal geodesic equation is already satisfied as a result of Theorem ~\ref{3- Thrm: Holographic dual} and that the vanishing of the tangential part of the second conformal geodesic equations amounts to a choice of parametrisation (see section \ref{subsec: Conformal geodesics} and Theorem~\ref{3- Thrm: Conformal Parametrisation}). Altogether this proves Theorem~\ref{3- Thrm: Conformal Geodesics}.
\end{proof}

	\section{Ambient surface and tractors} \label{Sec: Ambient Metric Realisation of Conformal Geodesics and its Relation to Tractors}
	
	In section \ref{subsec: Dual surface to a conformal curve} we associated to any curve $\g$ in a conformal manifold $X$ a unique surface $\gS_{\g}$ in the related Poincar\'e--Einstein space $\left(Y, g\right)$. This surface can be taken to be asymptotically defined by the expansion \eqref{3- Holographic dual, expansion} of Theorem~\ref{3- Thrm: Holographic dual}. We then showed (see Theorem~\ref{3- Thrm: Conformal Geodesics}) that $\g$ is a conformal geodesic if and only if the extrinsic curvature of $\gS_{\g}$ vanishes to one order higher than generically expected.
	
	Here we rephrase these results in terms of the ambient ((n+2)-dimensional) metric of Fefferman and Graham \cite{fefferman_ambient_2012}. This has two main advantages: first it will explicitly relate our constructions to the standard tractor formulation of the conformal geodesic equations~\cite{bailey_thomass_1994};  second the use of tractor calculus will dramatically shorten the proofs so that this section can be seen as alternative and more transparent derivations of Theorem~\ref{3- Thrm: Conformal Geodesics}. The reason why we postponed this discussion is that it relies on the simultaneous use of tractor calculus and the ambient metric which are perhaps not as well-known tools as those used in section~\ref{Sec: Holographic Prescription for Conformal Geodesics}.

	\subsection{The ambient metric and tractors: a very brief overview}
	
		In this section we briefly review from \cite{cap_standard_2003} the relation between the ambient metric and tractor calculus but otherwise assume that the reader is familiar with standard tractor calculus, see \cite{bailey_thomass_1994,curry_introduction_2018}. We also review how the conformal geodesic equations can be elegantly written in terms of tractors.
	
	\subsubsection{The ambient metric associated to $\left(X, [h]\right)$ and its Fefferman--Graham expansion}\label{subsubsec: The ambient metric}
	
Let $\left(X , [h]\right)$ be an $n$-dimensional conformal manifold. Let $\left(M , g_M\right)$ be the associated ambient (formal) metric \cite{fefferman_conformal_1985}, \cite{fefferman_ambient_2012}: By construction, $M$ is $(n+2)$-dimensional with $M= (-1,1) \times L$ where $L$ is the total space of the scale bundle $L\to X$. Recall that a conformal metric $\hConf$ is a section of $L^2 \otimes S^2T^*X$. Let $\left(\gs, y\right)$ be coordinates on $L$, then the pullback $g_L=\pi^*\hConf = \gs^2 h$ of $\hConf$ gives a metric on $L$. Let $\gr\from M \to \R$ be a boundary defining function, $L = \gr^{-1}(0)$, $d\gr \neq 0$. The ambient metric $g_M$ on $M$ extends $g_L$ and has an asymptotic expansion
\begin{equation}\label{4- FG First Normal Form3}
		g_M= -2 \gr d\gs^2 -2 \gs d\gs d\gr +\gs^2 \left[h -2\gr P +\Oc(\gr^2)\right].
\end{equation}
As in previous sections, if $n>2$ then $P$ is the Schouten tensor of $h$ while if $n=2$, $P$ corresponds to choice of M\"obius structure. Since we won't need the precise form of the expansion beyond second order, Equation \eqref{4- FG First Normal Form3} can be taken to define the metric $g_M$ (this amounts to only requiring $g_M$ to be Ricci-flat to lowest order).

The ambient metric \eqref{4- FG First Normal Form3} can be related to the Poincar\'e--Einstein metric by making use of the change of coordinate
	\begin{equation}\label{4- Coordinates change: Ambient to Poincare}
	\gs = l x^{-1},\quad  \gr= \frac{1}{2}x^2.
	\end{equation}
We then have
	\begin{equation}\label{4- FG Second Normal Form2}
	g_M = -dl^2 + \frac{l^2}{x^2} \left(dx^2 + \left[ h - x^2 P + \Oc(x^4) \right] \right)
	\end{equation}		
and the hypersurface $l=1$ identifies with the Poincaré--Einstein manifold $\left(Y , g\right)$. In particular $Y$ is a graph over the bundle of scale $L$ defined by
 \begin{equation}
\gr = \frac{1}{2}\left( \gs \right)^{-2}
\end{equation}
(see figure \ref{fig:AmbientMetric}).

\begin{figure}[h]
	\begin{center}
	\includegraphics[scale=0.5]{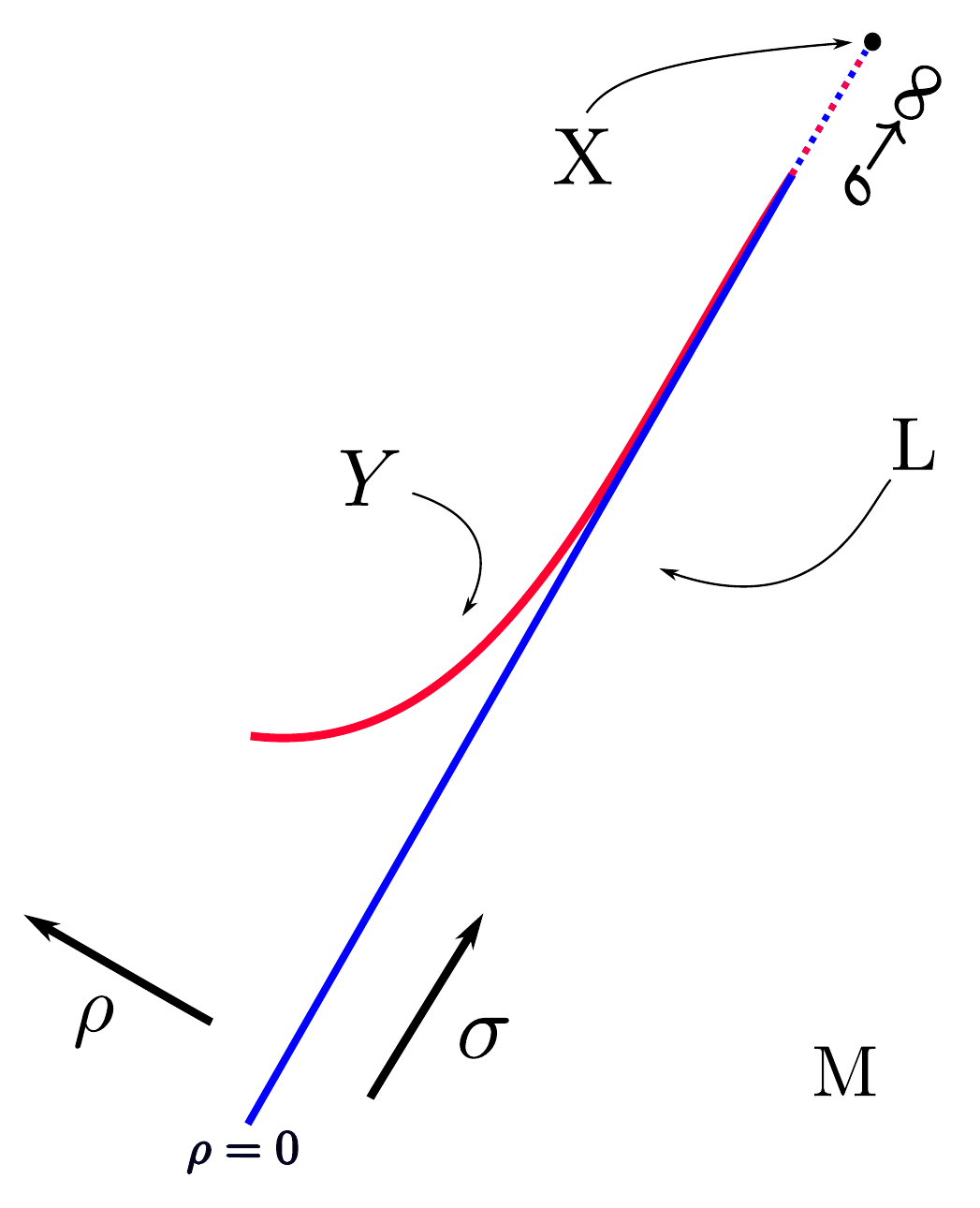}
	\end{center}
	  \caption{The Poincar\'e-Einstein ($Y$) manifold associated to $(X, \hConf)$ is a sub-manifold of the Ambient manifold ($M$). It is a graph over the bundle of scale $L \to X$. As $\gs$ goes to infinity, the conformal boundary $X$ of $Y$ identifies with a section of $L$.}
	  \label{fig:AmbientMetric}
\end{figure}

As $\gs$ goes to infinity the bundle of scale $L$ and the Poincar\'e--Einstein metric $Y$ both converge toward the same asymptotic boundary (identified with $X$). This somewhat fuzzy statement can be made precise by introducing a ``conformal compactification'' of the ambient metric generalizing Penrose's conformal compactification of Minkowski space. Since the ambient metric is only defined in a neighbourhood of $L$ there is only one copy of $X$ (``at null-infinity of $L$'') that needs to be added as a boundary (instead of a full asymptotic null cone for Minkowski compactification).

Finally, we note for future reference that the unit normal to $Y$ in $M$ is
\begin{equation}
\partial_l\Big|_{l=1} = \gs \partial_{\gs}\Big|_{l=1} =: E\Big|_{l=1}.
\end{equation}
(Here and everywhere in the subsequent subsections we will make use of the notation $E := \gs \partial_{\gs}$).
	
\subsubsection{Tractors and the ambient metric}	\label{subsubsec: Tractors and the ambient metric}

We briefly review from \cite{cap_standard_2003,gover_conformally_2003} the close relationship between tractor calculus on a conformal manifold $\left(X, \hConf\right)$ and the associated ambient metric $\left(M,g_M\right)$.

As we just discussed, the bundle of scales $L\to M$ can be seen as a hyper-surface in the ambient manifold $\left(M, g_M\right)$ through the ambient metric construction \cite{fefferman_conformal_1985},\cite{fefferman_ambient_2012}. The bundle of scales is a $\R^+$-principal bundle and the restriction $g_M \big|_{L}$ is homogeneous of degree $2$ under this action. Now let $y \in X$ and $\pi^{-1}(y) \subset L$ be the fibre over $y$ in $L$. Let $\iota \from \pi^{-1}(y) \to M$ be its inclusion in $M$ and consider sections of the pull-back bundle $\gG\left[\iota^* TM\right]$ which are homogeneous of degree $-1$ under the $\R^+$-action. This defines a $n+2$-dimensional vector space $\mathcal{T}_y$. Since the contraction of such sections with $g_M \big|_{L}$ is homogeneous degree zero this gives a metric on $\mathcal{T}_y$. Altogether this defines a $n+2$-dimensional metric vector bundle $\mathcal{T} \to X$ over $X$. It was shown in \cite{cap_standard_2003} that it canonically identifies with the standard tractor bundle. 
By construction, if $\iota \from L \to M$ is the inclusion, sections of $\mathcal{T}$ are sections of $\iota^* TM$ which are homogeneous degree $-1$ under the $\R^+$-action.
 
 It was also shown in \cite{cap_standard_2003} that the Levi-Civita connection $\nabla$ of the ambient metric induces a connection on the tractor bundle, as follows: 
 
   Let $Y^I$ be a section of the tractor bundle $Y^I \in \gG\left[\mathcal{T}\right]$, by construction this is a section of the pull-back bundle $\iota^* TM$ homogeneous of degree $-1$ under the $\R^+$-action. One first proves, see \cite{cap_standard_2003}, that the Levi-Civita connection $\nabla$ of the ambient metric satisfies,
 \begin{equation}\label{4- Identity: ambient LC1}
 \nabla Y^I = \nabla\Big|_{\rho=0} Y^I + \Oc(\rho) Y^I
 \end{equation}
 and
 \begin{align}\label{4- Identity: ambient LC2}
 \nabla_E\Big|_{\rho=0} Y^I &=0, & \nabla_Y\Big|_{\rho=0} E = Y^I
 \end{align} 
 (recall that $E = \gs \partial_{\gs}$).  Let $y \in\gG\left[TX\right]$ and let $\pi^{-1}(y) \in \gG\left[TL\right]$ be any lift of $y$ to $L$. From \eqref{4- Identity: ambient LC2} one sees that $\nabla_{\pi^{-1}(y)}\Big|_{\rho=0} Y^I$ does not depend on the choice of lift. One then show that $\nabla_{\pi^{-1}(y)}\Big|_{\rho=0} Y^I \in \gG\left[\iota^*TM\right]$ is homogeneous degree $-1$ under the $\R^+$-action and thus defines a section of $\mathcal{T}$. The Levi-Civita connection of the ambient metric thus induces a connection $D \from \mathcal{T} \to T^*X \otimes \mathcal{T}$ on the tractor bundle as
\begin{equation}
D_y Y^I \coloneqq \nabla_{\pi^{-1}(y)}\Big|_{\rho=0} Y^I.
\end{equation} It was proved in \cite{cap_standard_2003} that this connection actually coincides with the normal tractor connection. Finally, the tractor metric $g_{IJ} \in \Gamma\left[S^2 T^* \mathcal{T}\right]$ is obtained by taking inner product of tractors (seen as homogeneous degree $-1$ sections of $\iota^* TM \to L$) with the ambient metric (itself homogeneous degree $2$). This makes sense since the resulting function on $L$ is homogeneous degree zero, i.e a function on $X$.
	
\subsubsection{Tractor calculus and conformal geodesics}\label{subsubsec: Tractor calculus and conformal geodesics}

	We here review the tractor formulation of the conformal geodesic equations, see \cite{bailey_thomass_1994} for the original exposition and \cite{gover_distinguished_2018,gover_distinguished_2020} for recent developments.
	
	Let $\left(X , [h] \right)$ be an $n$-dimensional conformal manifold. We take $\gamma \from I \to X$ to be a curve in $X$ parametrised by a parameter $s$ and $\dg = \partial_s \gamma$. Let $h$ be a choice of representative for $\hConf$ and $|\dg|^2 = h\left(\dg, \dg \right)$. 
	
	Define the ``position tractor'' to be,
	\begin{equation}
	|\dg|^{-1} X^{I} = \begin{pmatrix}
	0 \\ 0 \\  \; |\dg|^{-1}
	\end{pmatrix}.
	\end{equation}
	By subsequent differentiations of the ``position tractor'' one obtains the ``velocity tractor'',
	\begin{equation}\label{4- Velocity Tractor}
	U^{I}= D_{\dg} \left(|\dg|^{-1}  X^{I} \right)= \begin{pmatrix}
	0 \\ |\dg|^{-1}\;\dg \\  \; \partial_s\left( |\dg|^{-1} \right)
	\end{pmatrix},
	\end{equation}
	and ``acceleration tractor'' :
	\begin{equation}
	A^{I}= D_{\dg} U^{I} = \begin{pmatrix}
	|\dg| \\ |\dg|v  \\  |\dg| \frac{1}{2} |v|^2 +  |\dg|^{-1}  \kappa(\g,v, h)
	\end{pmatrix}
	\end{equation}
	where
	\begin{align}
	v &\coloneqq \nabla_{\dg}\left(|\dg|^{-2} \dg \right), &
	\kappa(\g,v, h) &\coloneqq h\left(\dg , |\dg|^{-1}\nabla_{\dg}\left(|\dg|v\right) + \frac{1}{2}|v|^2 \dg - h^{-1}\left( P_h(\dg) \right)  \right).
	\end{align}
		
	It was proved in \cite{bailey_thomass_1994} that the vanishing of $A^{I}A_{I}= -2 \kappa(\g,v, h) $ is equivalent to the preferred conformal parametrisation of $\gamma$ while $D_{\dg}A^{I}=0$ is equivalent to the (parametrised) conformal geodesic equations \eqref{1- Conformal Geodesic Equation}.

Making use of the identification of sections of the tractor bundle with (homogeneous degree -1) sections of $\iota^* TM$ we have
\begin{align}\label{4- Position and Velocity Tractor}
|\dg|^{-1} X^I\left(\gs , y\right) &= \gs^{-1} \Big( |\dg|^{-1} E \Big), &
U^I\left(\gs , y\right)  &= \gs^{-1}\Big( |\dg|^{-1} \dg + \partial_s \left(|\dg|^{-1}\right) E \Big),
\end{align}
and
\begin{align}\label{4- Acceleration Tractor}
A^{I}\left(\gs , y\right)  &= \gs^{-1} \Bigg(
|\dg| \partial_{\gr} + |\dg|v  +  \left(|\dg| \frac{1}{2} |v|^2  +|\dg|^{-1} \kappa(\g,v, h)\right) E
\Bigg).
\end{align}

\subsection{Tractor calculus for ambient surfaces of conformal geodesics}

	As we have reviewed, the tractor calculus has a direct interpretation in terms of the Ricci calculus of the ambient manifold $(M, g_M)$. On the other hand the Poincar\'e--Einstein manifold $\left(Y,g\right)$ can always be thought as a sub-manifold of the ambient one. The ambient space therefore a good setting for
	relating the conformal geodesic equations of Section \ref{Sec: Holographic Prescription for Conformal Geodesics} with their tractor
	counterparts. This is one of the aims of this subsection. The second aim is to provide an alternative, more compact, proof of Theorem \ref{3- Thrm: Conformal Geodesics}.

\subsubsection{Tractor calculus, ambient metric and conformal curves}

Let $\g \from I \to X$ a curve in $X$ parametrised by a parameter $s$. The bundle of scales $L\to X$ naturally is a $\R^+$-principal bundle and we can thus consider $\Sc_{\g} \from I^2 \to L$, the unique $\R^+$-invariant lift of $\g$ to $L$. It satisfies
\begin{align}
\pi\left(\Sc_{\g}\right) &= \g, &  R_k \Sc_{\g} = \Sc_{\g} \quad  \forall k \in \R^+.
\end{align}
Where $R_k \from L \to L$ is the diffeomorphism given by the action of $k \in \R^+$ on $L$.

In coordinates, a convenient parametrisation is
\begin{equation}\label{4- L surface parametrisation}
\Sc_{\g} = \left\{
\begin{array}{ccc}
\gr\left(s,t\right) & = & 0 \\ \\
y^i\left(s,t\right) & = & \g(s) \\ \\
\gs\left(s,t\right) & = & f(t)\;|\dg|^{-1} 
\end{array}
\right\}.
\end{equation}
Since the metric induced by \eqref{4- FG Second Normal Form2} on $L$ is $g_L = \gs^2 h$ the corresponding induced metric on $\Sc_{\g}$ is degenerate. The above parametrisation has the following property: for fixed value of $t$, the surface $\Sc_{\g}$ restricts to a lift of $\g$ in $L$ with constant length $f(t)$ (with respect to $g_L = \gs^2 h$). The precise value of the length is a remaining choice of parametrisation.

The coordinate vectors given by this parametrisation are
\begin{align}
\partial_s &= f(t) \;U^I(s,t), &
\partial_t & = f'(t)  \; |\dg|^{-1}X^I(s,t),
\end{align}
where $|\dg|^{-1} X^i(s,t)$ and $U^I(s,t)$ are the position and velocity tractors \eqref{4- Position and Velocity Tractor} evaluated at $(\gs = \gs(s,t), y = y(s,t))$.

The above notation is convenient as covariant differentiation along $s$ -with
respect to the Levi-Civita connection of the ambient metric- can then be rewritten in terms of the tractor connection:
\begin{align}
\nabla_s \partial_t &= f'(t) D_s\left(|\dg|^{-1} X^I\right) = f'(t) \;U^I(s,t), &
\nabla_s \partial_s &= f(t) D_s U^I = f(t) \;A^I(s,t).
\end{align}
where $A^I(s,t)$ is the acceleration tractor \eqref{4- Acceleration Tractor} evaluated at $(\gs = \gs(s,t), y = y(s,t))$. 
We also immediately have $\nabla_s \nabla_s \partial_s =f(t) D_s A^I$ and thus $\g$ is a conformal geodesic if and only if
 \begin{equation}
	\nabla_s \nabla_s \partial_s=0.
\end{equation}

This tractor interpretation of the covariant differentiation along the surface $\Sc_{\g}$ in $L$ considerably simplifies our task. We will see that essentially the same identification is possible for the ambient surface $\gS_{\g}$ in $Y$: as $Y$ and $L$ asymptotically converge to the same asymptotic boundary, differential calculus on $\gS_{\g}$ in $Y$ asymptotically identifies with tractor calculus.

From now one we will take as a convenient parametrisation for $\Sc_{\g}$:
\begin{equation}\label{4- f(t) =1/t }
f(t) = \frac{1}{t}.
\end{equation}
With this parametrisation, when $t$ goes to zero the surface $\Sc_{\g}$ goes to the asymptotic boundary of $L$ (which, we recall, is identified with $X$).

\subsubsection{Ambient surface in the ambient metric}

In section \ref{subsec: Dual surface to a conformal curve} we associated to any curve $\g$ in $X$ an ambient surface $\gS_{\g}$ in $Y$. The following proposition gives the interpretation of this result in terms of the ambient metric. In fact, since $Y$ -identified with the hypersurface $l=1$ in $M$- is a graph over $L$, it is reasonable to expect that $\gS_{\g} \subset Y$ is a graph over $\Sc_{\g}  \subset L$ which is indeed the case:

\begin{Proposition}\mbox{}\label{4- Thrm: Holographic dual}\\	
Let $\g$ be a curve in $X$ and let $\left(M, g_M\right)$ be the ambient space asymptotically defined by \eqref{4- FG First Normal Form3}. Let $\Sc_{\g}$ be the $\R$-invariant lift of $\g$ to $L$ with the parametrisation \eqref{4- L surface parametrisation}, \eqref{4- f(t) =1/t }. 

 The ambient surface $\gS_{\g}$ to $\g$ is asymptotically a distinguished graph over $\Sc_{\g}$ given by the following expansion,
 \begin{equation}\label{4- Ambient dual}
 	\gS_{\g} = \left\{
 	\begin{array}{cccccc}
 		\gr\left(s,t\right) & = & 0                &+ \frac{t}{2} A^{\gr}(s,t)   & + \Oc(t^4)\\ \\
 		y^i\left(s,t\right) & = & \g(s)            &+ \frac{t}{2} A^{i}(s,t)  & + \Oc(t^4)  \\ \\
 		\gs\left(s,t\right) & = & f(t)\;|\dg|^{-1} &+ \frac{t}{2} \left( A^{\gs}(s,t) + \frac{1}{3}\left(A_JA^J\right) \;|\dg|^{-1} \right) & + \Oc(t^3)
 	\end{array}
 	\right\}.
 \end{equation}
Where $(A^{\gr},A^{i},A^{\gs})$ are the coordinates of the acceleration tractor, thought as the vector field \eqref{4- Acceleration Tractor}.

What is more, upon identifying the Poincaré--Einstein manifold $\left(Y, g\right)$ with the hypersurface $l=1$, this definition coincides with the definition of Theorem~\ref{3- Thrm: Holographic dual}. In particular the parametrisations \eqref{3- Holographic dual, expansion} and \eqref{4- Ambient dual} coincide.
\end{Proposition}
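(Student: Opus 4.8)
The plan is to establish both assertions at once by pushing the explicit description \eqref{3- Holographic dual, expansion} of $\gS_{\g}$ forward through the change of coordinates \eqref{4- Coordinates change: Ambient to Poincare} and checking that the result is precisely the tractor expansion \eqref{4- Ambient dual}. On the slice $l=1$ the change of variables reduces to $\gr = \tfrac12 x^2$ and $\gs = x^{-1}$, so $Y$ is the graph $\gr = \tfrac12 \gs^{-2}$ and its asymptotic boundary is $\Sc_{\g}$, sitting at $\gr = 0$ with $\gs = t^{-1}|\dg|^{-1}$. Because \eqref{4- Coordinates change: Ambient to Poincare} is a diffeomorphism, a term-by-term match of the two expansions shows simultaneously that $\gS_{\g}$ is the distinguished graph \eqref{4- Ambient dual} over $\Sc_{\g}$ and that this graph is the surface produced by Theorem~\ref{3- Thrm: Holographic dual}, with the same parametrisation.

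First I would treat the $\gr$- and $y^i$-slots, which match with almost no work. Squaring the $x$-expansion gives $\gr = \tfrac12 x^2 = \tfrac{t^2}{2}|\dg|^2 + \Oc(t^4)$, and the transverse coordinates are carried over unchanged, $y^i = \g^i + \tfrac{t^2}{2}|\dg|^2 v^i + \Oc(t^4)$. On the tractor side one evaluates the acceleration tractor \eqref{4- Acceleration Tractor} along $\Sc_{\g}$, where the homogeneity factor is $\gs^{-1} = t|\dg|$; this gives $A^{\gr} = t|\dg|^2$ and $A^i = t|\dg|^2 v^i$, so that $\tfrac{t}{2}A^{\gr}$ and $\tfrac{t}{2}A^i$ reproduce exactly the two expansions above. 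In particular these slots already confirm that the graph lands on $Y$ to leading order, since $\gr = \tfrac12 \gs^{-2}$ holds at order $t^2$.

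The real content sits in the $\gs$-slot, and this is the step I expect to be the main obstacle. Inverting the $x$-expansion produces $\gs = x^{-1} = t^{-1}|\dg|^{-1} + t\,b + \Oc(t^3)$ with a coefficient $b$ assembled from $|v|^2$ and $\kappa(\g,v,h)$, whereas the tractor expansion supplies the $\Oc(t)$ term as $\tfrac{t}{2}\bigl(A^{\gs} + \tfrac13 (A_J A^J)\,|\dg|^{-1}\bigr)$. Matching the two hinges on the tractor-norm identity $A_J A^J = -2\kappa(\g,v,h)$ recorded after \eqref{4- Acceleration Tractor}, which I would verify directly from the ambient metric \eqref{4- FG First Normal Form3} at $\gr = 0$: there only the cross term $-2\gs\,d\gs\,d\gr$ and the block $\gs^2 h$ survive, the cross pairing between the $\partial_{\gr}$- and $E$-parts of $A$ contributes $-2\kappa - |\dg|^2|v|^2$ while the $v$-$v$ pairing contributes $+|\dg|^2|v|^2$, leaving $-2\kappa$. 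The delicate bookkeeping is that the $\gs^{-1}$ homogeneity factor ties the $\Oc(t)$ term of $\gs$ to the subleading ($\Oc(t^4)$) behaviour of $\gr$ through the graph relation $\gr = \tfrac12 \gs^{-2}$, so one must confirm that the $\tfrac13(A_J A^J)$ correction is exactly what renders the expansion consistent both with this relation and with $\gs = x^{-1}$. A final check that the parametrisation meets the asymptotically isothermal normalisation \eqref{3- Induced metric1} — which is what singles out \eqref{3- Holographic dual, expansion} — then upgrades the equality of surfaces to an equality of parametrisations, completing the proof.
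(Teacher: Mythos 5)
Your proposal follows essentially the same route as the paper: both take the Poincar\'e--Einstein expansion \eqref{3- Holographic dual, expansion} of $\gS_{\g}$ on the slice $l=1$, push it through the coordinate change \eqref{4- Coordinates change: Ambient to Poincare}, and match slot by slot against \eqref{4- Ambient dual} using the identity $A_JA^J=-2\kappa(\g,v,h)$. The only ingredient the paper makes explicit that you leave implicit is that the fourth-order coefficient $x_4$ of the $x$-expansion vanishes in the isothermal parametrisation, which is what guarantees the absence of an order-$t^2$ term in the $\gs$-slot and hence the stated $\Oc(t^3)$ error there.
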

Along the way, proposition \ref{4- Thrm: Holographic dual} gives us another simple geometrical interpretation of the preferred conformal parametrisation for $\g$:
$\gS_{\g}$ is a graph above $\Sc_{\g}$ given by the acceleration tractor if and only if $\g$ has been given a preferred conformal parametrisation $A_IA^I = -2 \kappa(\g,v, h)=0$.

\begin{proof}\mbox{}\\
We recall the expansion of $\gS_{\g}$ given by Theorem~\ref{3- Thrm: Holographic dual}:
\begin{equation}
\gS_{\g} =\left\{\begin{array}{ccccccccccccccc}
x\left(s,t\right) & =& 0 &+& t |\dg| &+& 0 &+&\frac{t^3 }{3} x_3 &+& \frac{t^4 }{4} x_4 &+&\Oc\left(t^5\right) \\ \\
y^i\left(s,t\right) & =& \g^i(s) &+& 0  &+& \frac{t^2}{2} |\dg|^2 v^i &+& 0 &+& \Oc\left(t^4\right)\\ \\
l(s,t) & = &1
\end{array} \right\}\nonumber
\end{equation}
with $-\frac{1}{3}|\dg|^{-2} x_3 = \frac{1}{4}|\dg||v|^2 - \frac{1}{6} |\dg|^{-1} \kappa(\g,v, h)$. One can also show that if $\gS_{\g}$ is given an isothermal parametrisation then $x_4=0$. (If $n=3$ this uses the fact that we took the (free) third order coefficient in the expansion \eqref{4- FG Second Normal Form2} to be zero. This is however not really a choice here since from \cite{fefferman_conformal_1985}, \cite{fefferman_ambient_2012} this is forced on us if we want the ambient metric to be smooth.)

 Making use of the change of coordinates \eqref{4- Coordinates change: Ambient to Poincare} one readily sees that it is equivalent to \eqref{4- Ambient dual}.
\end{proof}

\subsubsection{Extrinsic curvature in the ambient metric}\label{subsubsec: Extrinsic curvature in the ambient metric}

We now show how the identification of tractor calculus with the asymptotic tensor calculus on $\gS_{\g}$ in the ambient metric simplifies the calculation of the extrinsic curvature of $\gS_{\g}$.

The parametrisation \eqref{4- Ambient dual} induces the coordinate basis
\begin{align}\label{4- Coordinate basis}
	\partial_s &= \frac{1}{t} U^I +\Oc(t) X^I + \Oc(  t^2), \\
	\partial_t &= \Big(-\frac{1}{t} + \frac{t}{3} A_J A^J \Big)E + A^I  + \Oc(t^2) X^I + \Oc(t^3).\nonumber
\end{align}	
Implicitly, here and in what follows, all tractors are evaluated at the point $\gs(s,t)$ given by \eqref{4- Ambient dual}. 
	 
	 With the identities \eqref{4- Identity: ambient LC1}, \eqref{4- Identity: ambient LC2} in hand, the covariant derivatives of \eqref{4- Coordinate basis} are just given by a few lines of computation:
	 
	 \begin{flalign}
	 \nabla_s \partial_s &= \frac{1}{t} \nabla_s\Big|_{\rho=0} U^I +\Oc(t) X^I + \Oc(\gs^{-1}  t) = \frac{1}{t} D_s U^I +\Oc(t) X^I + \Oc(t^2) \\
	 &= \frac{1}{t} A^I +\Oc(t) X^I + \Oc(t^2) = \Big(\frac{1}{t^2} - \frac{1}{3} A_J A^J \Big)E + \frac{1}{t} \partial_t +\Oc(t) X^I + \Oc(t^2), \nonumber&
	 \end{flalign}
	 \vspace{0.1cm}	 	 
	 \begin{flalign}
	 \nabla_t \partial_t &=  \left(\frac{1}{t^2} + \frac{1}{3} A_J A^J \right)E + \Big(-\frac{1}{t} + \frac{t}{3} A_J A^J \Big) \nabla_{t}\Big|_{\rho=0} E + \nabla_{t}\Big|_{\rho=0} A^I + \Oc(t) X^I + \Oc(t^2) &\\ 
	 &= \left(\frac{1}{t^2} + \frac{1}{3} A_J A^J \right)E + \Big(-\frac{1}{t} + \frac{t}{3} A_J A^J \Big) \partial_t 
+ \nabla_{A}\Big|_{\rho=0} A^I + \Oc(t) X^I + \Oc(t^2)  \nonumber&\\
	 &= \left(\frac{1}{t^2} + \frac{1}{3} A_J A^J \right)E + \Big(-\frac{1}{t} + \frac{t}{3} A_J A^J \Big) \partial_t  + \Oc(t) X^I + \Oc(t^2), \nonumber&
	 \end{flalign}
\vspace{0.1cm}	 
	 \begin{flalign}
	 \nabla_s \partial_t &=  \frac{t}{3}\partial_s \left( A_J A^J \right)E + \Big(-\frac{1}{t} + \frac{t}{3} A_J A^J \Big) \nabla_{s}\Big|_{\rho=0} E + \nabla_{s}\Big|_{\rho=0} A^I + \Oc(t^2) X^I + \Oc(t^3) \\
	 &= \frac{t}{3}\partial_s \left( A_J A^J \right)E + \Big(-\frac{1}{t} + \frac{t}{3} A_J A^J \Big) \partial_t + D_s A^I + \Oc(t^2) X^I + \Oc(t^3).\nonumber&
	 \end{flalign}
To obtain the extrinsic curvature, all is left to do is project out the directions \eqref{4- Coordinate basis}, tangent to $\gS_{\g}$.
\begin{Proposition}\mbox{}\label{4- Prop: Covariant derivatives}\\
	Let $\g$ be a curve in $\left(X,[h]\right)$. Let $\gS_{\g}$ be its ambient surface in the ambient space $\left(M, g_{M}\right)$ in the parametrisation given by Proposition~\ref{4- Thrm: Holographic dual}. Then its extrinsic curvature in $M$ is
\begin{align}
K_M\left(\partial_s, \partial_s \right) &= \Big(\frac{1}{t^2} - \frac{1}{3} A_J A^J \Big)E +\Oc(t^2) E + \Oc(t^2), \\
K_M\left(\partial_t, \partial_t \right) &= \left(\frac{1}{t^2} + \frac{1}{3} A_J A^J \right)E  + \Oc(t^2) E + \Oc(t^2), \\
K_M\left(\partial_s, \partial_t \right) &=  D_s A^I +
 \left( A_J A^J\right) U^I+ \Oc(t^2)E + \Oc(t^2).\label{4- Covariant derivatives}
\end{align}
\end{Proposition}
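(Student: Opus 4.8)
The plan is to read off the second fundamental form as the normal part of the ambient covariant derivatives $\nabla_s\partial_s$, $\nabla_t\partial_t$, $\nabla_s\partial_t$ that were computed just above: by definition $K_M(\partial_a,\partial_b)=(\nabla_a\partial_b)^{\perp}$, the component orthogonal to the tangent plane $\mathrm{span}\{\partial_s,\partial_t\}$ of $\gS_{\g}$. Since the three covariant derivatives are already written as combinations of $E$, $\partial_t$, $X^I$ and $D_sA^I$, the whole proof reduces to deciding which of these building blocks are tangent and which are normal, and then discarding the tangent ones.

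The key structural facts are the following. First, $\gS_{\g}$ lies inside the hypersurface $Y=\{l=1\}$, whose timelike unit normal in $M$ is exactly $E=\gs\partial_{\gs}$ (it restricts to $\partial_l$ on $Y$); hence $E$ is orthogonal to every vector tangent to $Y$, in particular to $\partial_s$ and $\partial_t$, so each $E$-term is already normal and passes unchanged through the projection. Second, the position tractor is proportional to $E$, namely $X^I=\gs^{-1}E$, so $X^I$-terms are normal as well; moreover on $\gS_{\g}$ one has $\gs=t^{-1}|\dg|^{-1}$, whence $\gs^{-1}=t|\dg|$ and any $\Oc(t)X^I$ contributes only $\Oc(t^2)E$. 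Third, $\partial_t$ is tangent by definition, while from the coordinate basis \eqref{4- Coordinate basis} the velocity tractor satisfies $U^I=t\,\partial_s+\Oc(t^2)$, so $U^I$ is tangent to leading order.

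With this dictionary the diagonal components are immediate: in $\nabla_s\partial_s$ and $\nabla_t\partial_t$ one simply deletes the explicit $\partial_t$-terms (tangent), keeps the $E$-terms (normal), and rewrites the $\Oc(t)X^I$ remainder as $\Oc(t^2)E$, reproducing the stated $K_M(\partial_s,\partial_s)$ and $K_M(\partial_t,\partial_t)$. The mixed component needs one further step. After deleting the explicit $\partial_t$-term from $\nabla_s\partial_t$, the surviving piece $D_sA^I$ is not yet normal: it retains a tangential component along $\partial_s\simeq t^{-1}U^I$. To strip it off I would use the tractor identities $A_IU^I=0$ and $U_IU^I=1$ (both read directly off the explicit slots of $U^I,A^I$ and the tractor metric); differentiating $A_IU^I=0$ gives $(D_sA^I)U_I=-A_IA^I=-A_JA^J$. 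Thus the $\partial_s$-projection of $D_sA^I$ is $-(A_JA^J)U^I$ to leading order, and removing it produces exactly the correction $D_sA^I+(A_JA^J)U^I$; one checks that this combination indeed satisfies $g\big(D_sA^I+(A_JA^J)U^I,\,U^I\big)=0$.

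The step demanding genuine care — and the main obstacle — is the order bookkeeping in this mixed component. One must verify that, after removing both the $\partial_s$- and $\partial_t$-tangential parts, all leftover contributions (the explicit $\tfrac{t}{3}\partial_s(A_JA^J)E$ term, the residual $\partial_t$-projection of $D_sA^I$, and the $\Oc(t^2)X^I$ remainder) collapse into the advertised error $\Oc(t^2)E+\Oc(t^2)$, rather than leaving a spurious lower-order normal term. This is where the precise induced metric of Corollary~\ref{3- Crllr: Induced metric} and the identities \eqref{4- Identity: ambient LC1}, \eqref{4- Identity: ambient LC2} enter, the delicate cancellations relying on the leading value $g(E,E)=-1$, on $\nabla_sE=\partial_s$ to leading order, and on the off-diagonal induced metric being higher order for the isothermal parametrisation of $\gS_{\g}$. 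Once these are in place the projection is routine and yields the three formulas of the proposition.
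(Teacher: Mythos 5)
Your proposal is correct and follows essentially the same route as the paper: the paper's proof consists precisely of the covariant-derivative computations displayed before the Proposition followed by the instruction to project out the tangent directions $\{\partial_s,\partial_t\}$, which is exactly what you do. In fact you supply a detail the paper leaves implicit, namely that differentiating $A_IU^I=0$ gives $(D_sA^I)U_I=-A_JA^J$, which is what produces the correction term $\left(A_JA^J\right)U^I$ making $K_M(\partial_s,\partial_t)$ orthogonal to $\partial_s\simeq t^{-1}U^I$; the residual order bookkeeping you flag is likewise left unspoken in the paper.
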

Note that $\Oc(t^2)E = \Oc(t) X^I$. Since $E$ is the unit normal to $Y$ in $M$ we immediately have a straightforward alternative proof to Theorem~\ref{3- Thrm: Conformal Geodesics}:

Suppose that $\g$ satisfies the conformal geodesic equations $D_s A^I=0$ then the extrinsic curvature of $\gS_{\g}$ in $M$ in the coordinate basis is proportional to $E$ up to order $t^2$ and therefore the extrinsic curvature of $\gS_{\g}$ in $Y$ vanishes up to order $t^2$. Since the norm of the coordinate basis is of order one over $t$ and the Poincaré metric diverges as $t^{-2}$ the norm of the extrinsic curvature vanishes up to order $t^2$. 

The other way round, if the norm of the extrinsic curvature vanishes up to order $t^2$ then, in the coordinate basis, the extrinsic curvature of $\gS_{\g}$ in $Y$ vanishes up to order $t^2$. Consequently the extrinsic curvature of $\gS_{\g}$ in $M$ is proportional to $E$ up to order $t^2$ and from Proposition~\ref{4- Prop: Covariant derivatives} this implies that $D_s A^I =0$ which is equivalent to the conformal geodesics equations.

{
\bibliography{Biblio}
}

\end{document}